\newcommand{\qed}{\hfill \rule{2.5mm}{2.5mm}}
\newcommand{\cref}[1]{\cite{#1}}
\begin{document}
\newtheorem{thm}{Theorem}[section]
\newtheorem{defs}[thm]{Definition}
\newtheorem{lem}[thm]{Lemma}
\newtheorem{rem}[thm]{Remark}
\newtheorem{cor}[thm]{Corollary}
\newtheorem{prop}[thm]{Proposition}
\renewcommand{\theequation}{\arabic{section}.\arabic{equation}}
\newcommand{\newsection}[1]{\setcounter{equation}{0}\section{#1}}
\title{ Near-Epoch Dependence in Riesz Spaces
      \footnote{{\bf Keywords:} vector lattices, Riesz space, conditional expectation operators, mixing processes, mixingales, near-epoch dependence \
      {\em Mathematics subject classification (2000):} 47B60, 60G20.}}
      
\author{
{Wen-Chi Kuo}
 \footnote{Supported in part by NRF grant number CSUR160503163733.} \\
School of Mathematics, 
University of the Witwatersrand\\ 
Private Bag 3, P O WITS 2050, South Africa
\\ \\
{Michael J. Rogans}\\
School of Statistics and Actuarial Science,\\
University of the Witwatersrand\\
Private Bag 3, P O WITS 2050, South Africa\\ \\
{ Bruce A. Watson} \footnote{Supported in part by the Centre for Applicable Analysis and
Number Theory and by NRF grant number IFR170214222646 with grant no. 109289.}\\
School of Mathematics, 
University of the Witwatersrand\\
Private Bag 3, P O WITS 2050, South Africa}
\maketitle
\abstract{The abstraction of the study of stochastic processes to Banach lattices and vector lattices has received much attention
by Grobler, Kuo, Labuschagne, Stoica, Troitsky and Watson over the past fifteen years.
By contrast mixing processes have received very little attention. 
In particular mixingales were generalized to the Riesz space setting in
{\sc W.-C. Kuo, J.J. Vardy, B.A. Watson,}
Mixingales on Riesz spaces,
{\em J. Math. Anal. Appl.}, \textbf{402} (2013), 731-738.
The concepts of strong and uniform mixing as well as related mixing inequalities were extended to this setting in
{\sc W.-C. Kuo, M.J. Rogans, B.A. Watson,}
Mixing inequalities in Riesz spaces,
{\em J. Math. Anal. Appl.}, \textbf{456} (2017), 992-1004.
In the present work we formulate the concept of near-epoch dependence for Riesz space processes and show that if a process is near-epoch dependent and either strong or uniform mixing then the process is a mixingale, giving access to a law of large numbers. The above is applied to autoregessive processes of order 1 in Riesz spaces.
}
\parindent=0in
\parskip=.2in
\newsection{Introduction}
The 1962 paper of Ibragimov \cite[pages 370-371]{ibragimov} considers the variance and limiting cumulative distribution of functions of stationary strong mixing processes. Here the expectation of the deviation of the process from the time symmetric conditional expectations of the process with respect to the 
events in the time window is assumed to be summable over all window sizes.
Billingsley \cite[Section 21]{bill68} in his 1967 monograph developed on the ideas of Ibragimov. He worked in $L^2$, as opposed to $L^1$ used by Ibragimov, and was able to give a functional central limit theorem for such processes. Further relaxing the assumptions on these processes, McLeish 
\cite[page 837]{mcleish} was still able to obtain a strong law of large numbers for these processes as an application of his theory of mixingales.
In addition, it was McLeish who first used the term `epoque' in this context.
Gallant and White \cite{gallantwhite} generalized the theory to arrays of processes in $L^p$, they also introduced the term `near-epoch dependence' and presented a unified theory thereof.
We refer the reader to Davidson \cite[page 261]{davidson} for a further history of this development.

 For the reader's convenience we recall here the definition of near-epoch dependence given by Davidson in \cite{davidson}. 
A sequence of integrable random variables $(X_i)_{i\in \mathbb{Z}}$ in a probability space
$(\Omega,\mathcal{F}, \mathbb{P})$
is said to be near-epoch dependent
in $L^p$-norm
 on the sequence $(V_i)_{i\in \mathbb{Z}}$ in $(\Omega,\mathcal{F}, \mathbb{P})$
if, for $\mathcal{F}_{i-m}^{i+m}=\sigma(V_{i-m},\dots,V_{i+m})$, the $\sigma$-algebra generated by $V_{i-m}, \ldots, V_{i+m}$,
$$\|X_i-\mathbb{E}[X_i|\mathcal{F}_{i-m}^{i+m}]\|_p\le d_iv_m$$
where $v_m\to 0$ as $m\to\infty$ and $(d_i)_{i \in \mathbb{Z}}$ is a sequence of positive constants.
.

Schaefer \cite{schaefer}, Stoica \cite{stoica1, stoica2} and Troitsky \cite{troitsky} considered the extension of the concepts of conditional expectation and martingales to Banach lattices. See \cite{g-l, stoica-2015, troitskyx} for some recent developments in this area. 
In 2004, Kuo, Labuschagne and Watson \cite{discrete} generalized these concepts to vector lattices (Riesz spaces). de Pagter and Grobler in \cite{dPG} introduced the extension of a conditional expectation operator on a probability space to its natural domain. This extension was generalized to the Riesz space setting by Kuo, Labuschagne and Watson in \cite{condexp} and enabled the definition of the generalized $L^p$ spaces in \cite{trabelsi, mixing, stoc-int}. In particular,
the natural domain of a Riesz space conditional expectation operator, $T$, is
identified with the generalized $L^p$ space $\mathcal{L}^1(T)$ and $T|\cdot|$ defines
an $R(T)_+$ valued norm on  $\mathcal{L}^1(T)$, see \cite{mixing}, where 
$\mathcal{L}^\infty(T)$ and its $R(T)_+$ valued norm are also defined.
Building on this structure, using functional calculus, similar constructions can be made  for $p\in (1,\infty)$, see \cite{trabelsi}.
Critical in all of the above is that $R(T)$ is a universally complete $f$-algebra, see \cite{mixing}. The final piece needed in this context is Jensen's inequality in Riesz spaces, developed by Grobler in \cite{jensen's inequality}.

In \cite{ando-douglas} a Douglas-And\^o-Radon-Nikod\'ym theorem on Riesz spaces with a conditional expectation operator was proved. This gives the necessary tools for building and identifying conditional expectation operators on Riesz spaces, required in the study of Markov, \cite{vw,vw1} and mixing processes, \cite{mixing}. The study of mixing processes in Riesz spaces began with the the formulation of mixingales in Riesz spaces in \cite{mix}, where a law of large numbers was proved for Riesz space mixingales. In \cite{mixing} strong and uniform mixing processes were formulated in Riesz spaces and mixing inequalities proved for them.

In this work we introduce the concept of near-epoch dependence for Riesz space processes and show that if a process is near-epoch dependent and either strong or uniform mixing then the process is a mixingale, giving access again to a law of large numbers. Finally the above is applied to autoregessive processes of order 1 in Riesz spaces.

In Section 2 we provide the required background material on the generalized $L^p$ spaces. In Section 3 we recall for the readers' convenience the necessary results on mixingales and mixing processes in Riesz spaces.
In Section 4 we formulate near-epoch dependence for Riesz space processes and
prove that a near-epoch dependent process which is strong or uniform mixing is a mixingale, hence showing that such processes obey a law of large numbers.
In Section 5 we apply the results of Section 4 to autoregressive processes of order 1. These results lead to new results when applied to the classical probability setting, for this we refer the reader to the last section of \cite{mixing}.


\newsection{Preliminaries}

In this section we outline the prerequisite material relating to stochastic processes in Riesz spaces necessary for this paper. For a more thorough examination of the contents of this section, see, for example, \cite{discrete, condexp}. In addition, we will define the $\mathcal{L}^{2}(T)$ space and present the corresponding vector-valued norm, which will be defined with respect to the conditional expectation operator on the space.

\begin{defs}
\label{conditional expectation operator}
Let $E$ be a Dedekind complete Riesz space with weak order unit. A positive order continuous linear projection $T$ on $E$ with range $\mathcal{R}(T)$, a Dedekind complete Riesz subspace of $E$, is said to be a conditional expectation operator if $Te$ is a weak order unit of $E$ for each weak order unit $e$ of $E$.
\end{defs}

A Riesz space $E$ is said to be universally complete if $E$ is Dedekind complete and every subset of $E$ consisting of mutually disjoint elements has a supremum in $E$. If a Riesz space $E$ is not universally complete, we can define its universal completion $E^{u}$ as the universally complete Riesz space that contains $E$ as an order dense subspace. For a strictly positive conditional expectation operator $T$ on a Riesz space $E$, we denote by $\text{dom}(T)$ the maximal domain in $E^{u}$ to which $T$ can be extended as a conditional expectation operator, and we denote this extension again by $T$. A Dedekind complete Riesz space $E$ with conditional expectation operator $T$ and weak order unit $e = Te$ is said to be $T$-universally complete if $E = \text{dom}(T)$.

For a Dedekind complete Riesz space $E$ with conditional expectation operator $T$ and weak order unit $e = Te$, we denote the subspace of $e$-bounded elements by
\begin{align*}
E^{e} = \{f \in E : |f| \leq ke, \text{ for some } k \in \mathbb{R}_{+} \},
\end{align*}
which has a natural $f$-algebra structure. As noted in \cite{discrete}, we can extend the $f$-algebra structure in $E^{e}$ uniquely to the universal completion $E^{u}$.
This makes such an $f$-algebra structure accessible for
 $\text{dom}(T)$. 
 The multiplication so introduced is order continuous as the space
 is Archimedean, see \cite{zaanenII}. We recall from \cite{condexp} that a conditional expectation operator $T$ is an  averaging operator, i.e., if $f \in \mathcal{R}(T)$ and $g \in E$ such that $fg \in E$, then $T(fg) = fTg$. In addition, if $E$ is $T$-universally complete then $\mathcal{R}(T)$ is universally complete, as shown in \cite{mixing}.

\begin{defs}
\label{LpT spaces}
Let $E$ be a Dedekind complete Riesz space with conditional expectation operator $T$ and weak order unit $e = Te$. Define
\begin{enumerate}
\item[(i)] $\mathcal{L}^{1}(T) = \text{dom}(T)$,
\item[(ii)] $\mathcal{L}^{2}(T) = \{ f \in \mathcal{L}^{1}(T) : f^{2} \in \mathcal{L}^{1}(T) \}$,
\item[(iii)] $\mathcal{L}^{\infty}(T) = \{ f \in \mathcal{L}^{1}(T) : |f| \leq g, \text{ for some } g \in \mathcal{R}(T)_{+}\}$.
\end{enumerate}
\end{defs}

Note that if $f, g \in \mathcal{L}^{2}(T)$, then the product $fg \in \mathcal{L}^{1}(T)$. This is the case since $0 \leq (f \pm g)^{2} = f^{2} + g^{2} \pm 2fg$, which gives $2|fg| \leq f^{2} + g^{2} \in \mathcal{L}^{1}(T)$, and so the claim follows from the fact that $\mathcal{L}^{1}(T)$ is an order ideal in $E^{u}$. Recall from \cite{mixing} that the $\mathcal{L}^{1}(T)$ and $\mathcal{L}^{\infty}(T)$ spaces are $\mathcal{R}(T)$-modules. The analogous result for $\mathcal{L}^{2}(T)$ is presented as follows.

\begin{thm}
\label{Lp(T) are R(T)-modules}
$\mathcal{L}^{2}(T)$ is an $\mathcal{R}(T)$-module.
\end{thm}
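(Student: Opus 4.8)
The plan is to reduce the statement to two verifications: that $\mathcal{L}^2(T)$ is an additive subgroup (indeed a linear subspace) of $E^u$, and that the $f$-algebra multiplication of $E^u$ restricts to a well-defined map $\mathcal{R}(T)\times\mathcal{L}^2(T)\to\mathcal{L}^2(T)$. Once these are established the module axioms (bilinearity, associativity of the action, and $ef=f$) require no separate proof, since they hold already for the commutative $f$-algebra multiplication on $E^u$ of which the action is a restriction, and $e$ is its multiplicative identity.

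For the first point, closure of $\mathcal{L}^2(T)$ under multiplication by real scalars is immediate from the definition. For closure under addition, take $f,g\in\mathcal{L}^2(T)$; then $f+g\in\mathcal{L}^1(T)$ because $\mathcal{L}^1(T)$ is a subspace, and $(f+g)^2=f^2+2fg+g^2$, where $f^2,g^2\in\mathcal{L}^1(T)$ by hypothesis and $fg\in\mathcal{L}^1(T)$ by the observation preceding the theorem. Since $\mathcal{L}^1(T)$ is an order ideal of $E^u$, it follows that $(f+g)^2\in\mathcal{L}^1(T)$, hence $f+g\in\mathcal{L}^2(T)$.

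The substantive step is to show $af\in\mathcal{L}^2(T)$ whenever $a\in\mathcal{R}(T)$ and $f\in\mathcal{L}^2(T)$. First, $af\in\mathcal{L}^1(T)$ because $f\in\mathcal{L}^1(T)$ and $\mathcal{L}^1(T)$ is an $\mathcal{R}(T)$-module (recalled from \cite{mixing}). Next, using commutativity and associativity of the multiplication in $E^u$ one has $(af)^2=a^2f^2$; since $\mathcal{R}(T)$ is a universally complete $f$-algebra, $a^2\in\mathcal{R}(T)$, and since $f^2\in\mathcal{L}^1(T)$ with $\mathcal{L}^1(T)$ an $\mathcal{R}(T)$-module, $a^2f^2\in\mathcal{L}^1(T)$. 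Therefore $(af)^2\in\mathcal{L}^1(T)$, i.e. $af\in\mathcal{L}^2(T)$, as required.

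I do not expect a genuine obstacle; the computation is short and essentially formal. The one point that must be handled with care — and where the earlier structure theory is actually used — is the assertion that $a^2\in\mathcal{R}(T)$, that is, that $\mathcal{R}(T)$ is closed under the $f$-algebra multiplication of $E^u$. This is precisely the content of the fact recalled in the Preliminaries that $\mathcal{R}(T)$ is a universally complete $f$-algebra, which itself rests on the averaging identity $T(ab)=aTb$ for $a\in\mathcal{R}(T)$; a self-contained version would reprove $a^2\in\mathcal{R}(T)$ from that identity, first for $e$-bounded $a$ and then in general by order continuity of the multiplication.
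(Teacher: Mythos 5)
Your proposal is correct and its key step is exactly the paper's argument: write $(af)^2=a^2f^2$, note $a^2\in\mathcal{R}(T)$ since $\mathcal{R}(T)$ is an $f$-algebra, and conclude $a^2f^2\in\mathcal{L}^1(T)$ from the fact that $\mathcal{L}^1(T)$ is an $\mathcal{R}(T)$-module. The extra verification that $\mathcal{L}^2(T)$ is closed under addition (via the observation that $fg\in\mathcal{L}^1(T)$ for $f,g\in\mathcal{L}^2(T)$) is a welcome but minor supplement that the paper leaves implicit.
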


\begin{proof}
Consider $f \in \mathcal{L}^{2}(T)_{+}$ and $g \in \mathcal{R}(T)$. Then $f^{2} \in \mathcal{L}^{1}(T)$ by definition, and $g^{2} \in \mathcal{R}(T)$ as $\mathcal{R}(T)$ is an $f$-algebra. Therefore, since $\mathcal{L}^{1}(T)$ is an $\mathcal{R}(T)$-module, $(fg)^{2} = f^{2} g^{2} \in \mathcal{L}^{1}(T)$, giving that $fg \in \mathcal{L}^{2}(T)$. \qed
\end{proof}

From \cite{mixing}, we have that the $T$-conditional norms $\Vert \cdot \Vert_{T,1} : f \mapsto T|f|$ and $\Vert \cdot \Vert_{T,\infty} : f \mapsto \inf\{g \in \mathcal{R}(T)_{+} : |f| \leq g\}$ define $\mathcal{R}(T)$-valued norms on $\mathcal{L}^{1}(T)$ and $\mathcal{L}^{\infty}(T)$, respectively. To define an analogous $T$-conditional norm on $\mathcal{L}^{2}(T)$, we require the following result.

\begin{lem}
\label{square root}
Let $E$ be a Dedekind complete Riesz space with weak order unit $e$. For all $f \in E_{+}$, there exists $\sqrt{f\,} \in E_{+}$ such that $\sqrt{f\,}^{\, 2} = f$.
\end{lem}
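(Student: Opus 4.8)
The plan is to prove the statement first for $e$-bounded $f$ by invoking the $f$-algebra structure on $E^{e}$, and then to reduce the general case to this one by truncation. Since $E$ is Dedekind complete, the principal ideal $E^{e}$ is Dedekind complete as well, and it is an Archimedean $f$-algebra with multiplicative (strong) unit $e$. On such an algebra the Freudenthal spectral theorem furnishes a functional calculus; in particular every $h\in E^{e}_{+}$ has a square root $\sqrt{h}\in E^{e}_{+}$ with $\sqrt{h}^{\,2}=h$, the map $h\mapsto\sqrt{h}$ is monotone on $E^{e}_{+}$, and the arithmetic--geometric mean inequality gives $\sqrt{h}=\sqrt{e\cdot h}\le(e+h)/2$. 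This disposes of the case $f\in E^{e}_{+}$.

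Now take an arbitrary $f\in E_{+}$ and set $f_{n}:=f\wedge ne\in E^{e}_{+}$ and $g_{n}:=\sqrt{f_{n}}\in E^{e}_{+}$ for $n\in\mathbb{N}$. As $(f_{n})$ increases and the square root is monotone, $(g_{n})$ increases; moreover the inequality above yields $g_{n}\le(e+f_{n})/2\le(e+f)/2\in E$, so $(g_{n})$ is order bounded in $E$. By Dedekind completeness of $E$ the element $g:=\sup_{n}g_{n}\in E_{+}$ exists, and we put $\sqrt{f}:=g$. It remains to verify $g^{2}=f$. Because $E$ is order dense, hence regularly embedded, in $E^{u}$, the relation $g_{n}\uparrow g$ continues to hold in $E^{u}$; since the multiplication on $E^{u}$ is order continuous (as recalled in Section~2), squaring is order continuous along increasing sequences of positive elements, so $g_{n}^{2}\uparrow g^{2}$ in $E^{u}$. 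On the other hand $g_{n}^{2}=f_{n}=f\wedge ne$, and $f\wedge ne\uparrow f$ since $e$ is a weak order unit, the band it generates being all of $E$. Comparing these two order limits gives $g^{2}=f$.

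The one point that genuinely needs care is the order-continuity step: from order continuity of the multiplication in each variable separately one must deduce that $a_{n}\uparrow a$ and $b_{n}\uparrow b$ in $E^{u}_{+}$ force $a_{n}b_{n}\uparrow ab$. This is routine --- the sequence $(a_{n}b_{n})$ is increasing and bounded above by $ab$, say with supremum $c\le ab$; holding $n$ fixed and letting the other index run gives $a_{n}b\le c$, and then letting $n\to\infty$ gives $ab\le c$, so $c=ab$ --- but it should be written out. One should likewise note that $\sup_{n}g_{n}$ computed in $E$ coincides with the supremum in $E^{u}$ (regularity of the embedding $E\hookrightarrow E^{u}$), and that the truncation $f\wedge ne\uparrow f$ is exactly where the weak order unit hypothesis is used.
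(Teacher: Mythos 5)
Your argument is correct, and it reorganizes the paper's proof rather than reproducing it. The paper works with a single Freudenthal-type spectral sequence $f_n \uparrow f$ built from band projections (including the tail term $n(I-P_{(ne-f)^+})e$ to handle unboundedness), takes square roots termwise using disjointness of the summands and $\sqrt{Pe}=Pe$, and bounds $\sqrt{f_n}$ above by $P_{(e-f)^+}e+(I-P_{(e-f)^+})f$ via a band decomposition; the limit is then identified as $\sqrt{f}$ by order continuity of multiplication in $E^u$. You instead split the problem in two: the bounded case is delegated to the functional calculus on the unital $f$-algebra $E^e$ (which is a standard fact, but note it is exactly the content the paper proves by hand --- its spectral sum with termwise square roots \emph{is} that functional calculus, so you are citing rather than establishing the core construction), and the unbounded case is handled by truncation $f\wedge ne\uparrow f$ together with the AM--GM bound $\sqrt{h}\le(e+h)/2$ to get an order bound independent of $n$. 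What your route buys is a cleaner separation of concerns and a slicker upper bound; what the paper's route buys is self-containedness, since it never has to appeal to square roots in $E^e$ as a known result. The final limiting step is the same in both, and your explicit verification that separate order continuity of the multiplication yields $a_nb_n\uparrow ab$ along increasing positive sequences, and that suprema in $E$ persist in $E^u$ by order density, addresses points the paper passes over silently.
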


\begin{proof}
For $f \in E_{+}$, define, for each $n \in \mathbb{N}$,
\begin{align*}
f_{n} = \sum_{k=0}^{n2^{n} - 1} \frac{k}{2^{n}} P_{\left(\frac{k+1}{2^{n}}e - f\right)^{+}} \Big(I - P_{\left(\frac{k}{2^{n}}e - f\right)^{+}}\Big)e + n\big(I - P_{(ne - f)^{+}}\big)e,
\end{align*}
in which case $f_{n} \uparrow f$ in $E$ as $n \rightarrow \infty$. Then, for any band projection $P$ on $E$, recall that $Pe \cdot Pe = PPe = P^{2}e = Pe$, implying that $\sqrt{Pe} \, = Pe$. This, in combination with the fact that the summation above consists of mutually disjoint terms, gives that
\begin{align*}
\sqrt{f_{n}} \, = \sum_{k=0}^{n2^{n} - 1} \sqrt{\frac{k}{2^{n}}} \, P_{\left(\frac{k+1}{2^{n}}e - f\right)^{+}} \Big(I - P_{\left(\frac{k}{2^{n}}e - f\right)^{+}}\Big)e + \sqrt{n} \, \big(I - P_{(ne - f)^{+}}\big)e.
\end{align*}
It is easy to see that the sequence $(\sqrt{f_{n}} \, )_{n \in \mathbb{N}} \subset E_{+}$ is increasing. To obtain an upper bound, consider
\begin{align*}
P_{(e-f)^{+}} \sqrt{f_{n}} \, &= \sum_{k=0}^{2^{n} - 1} \sqrt{\frac{k}{2^{n}}} \, P_{\left(\frac{k+1}{2^{n}}e - f \right)^{+}} \Big(I - P_{\left(\frac{k}{2^{n}}e - f\right)^{+}}\Big)e \\
&\leq \sum_{k=0}^{2^{n} - 1} P_{\left(\frac{k+1}{2^{n}}e - f \right)^{+}} \Big(I - P_{\left(\frac{k}{2^{n}}e - f\right)^{+}}\Big)e \\
&= P_{(e-f)^{+}} e.
\end{align*}
On the other hand,
\begin{align*}
\big(I - P_{(e-f)^{+}}\big) \sqrt{f_{n}} \, = \, &\sum_{k=2^{n}}^{n2^{n}-1} \sqrt{\frac{k}{2^{n}}} \, P_{\left(\frac{k+1}{2^{n}}e - f \right)^{+}} \Big(I - P_{\left(\frac{k}{2^{n}}e - f\right)^{+}}\Big)e 
\, + \sqrt{n} \, \big(I - P_{(ne - f)^{+}}\big)e \\
\leq \, &\sum_{k=2^{n}}^{n2^{n}-1} \frac{k}{2^{n}} P_{\left(\frac{k+1}{2^{n}}e - f\right)^{+}} \Big(I - P_{\left(\frac{k}{2^{n}}e - f\right)^{+}}\Big)e + n \big(I - P_{(ne - f)^{+}}\big)e \\
\leq \, &\sum_{k=2^{n}}^{n2^{n}-1} P_{\left(\frac{k+1}{2^{n}}e - f\right)^{+}} \Big(I - P_{\left(\frac{k}{2^{n}}e - f\right)^{+}}\Big)f + \big(I - P_{(ne - f)^{+}}\big)f \\
= \, &\big(I - P_{(e - f)^{+}}\big)f.
\end{align*}
Therefore, for all $n \in \mathbb{N}$,
\begin{align*}
\sqrt{f_{n}} \, &= P_{(e-f)^{+}} \sqrt{f_{n}} \, + \big(I - P_{(e-f)^{+}}\big) \sqrt{f_{n}} 
\leq P_{(e-f)^{+}}e + \big(I - P_{(e-f)^{+}}\big)f \in E_{+}.
\end{align*}
Hence, we have that the increasing sequence $(\sqrt{f_{n}} \, )_{n \in \mathbb{N}} \subset E_{+}$ is bounded above in $E$, and so by the Dedekind completeness of $E$, there exists $g \in E_{+}$ such that $\sqrt{f_{n}} \, \uparrow g$ as $n \rightarrow \infty$. Finally, since $\sqrt{f_{n}}^{\, 2} = f_{n}$ for all $n \in \mathbb{N}$, $f_{n} \uparrow f$, and $\sqrt{f_{n}} \, \uparrow g$, we have that $g^{2} = f$, which follows from the order continuity of multiplication in $E^{u}$. As such, we can write $g = \sqrt{f\,} \,$. \qed
\end{proof}

From the preceding proof, it is apparent that if $0 \leq f \leq g$ in $E$, then $\sqrt{f } \, \leq \sqrt{g} \,$.

\begin{thm}
\label{riesz space norms}
Let $E$ be a $T$-universally complete Riesz space with weak order unit, where $T$ is a strictly positive conditional expectation operator on $E$. The map
\begin{align*}
f \mapsto \Vert f \Vert_{T,2} = \sqrt{T|f|^{2}}
\end{align*}
defines an $\mathcal{R}(T)_{+}$ valued norm on $\mathcal{L}^{2}(T)$.
\end{thm}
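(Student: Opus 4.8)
The plan is to verify, for the map $f\mapsto\|f\|_{T,2}=\sqrt{T|f|^2}$, the three defining properties of an $\mathcal{R}(T)_+$-valued norm on the $\mathcal{R}(T)$-module $\mathcal{L}^2(T)$ (that $\mathcal{L}^2(T)$ is such a module is Theorem~\ref{Lp(T) are R(T)-modules}): that $\|\cdot\|_{T,2}$ takes values in $\mathcal{R}(T)_+$ and vanishes only at $0$; that it is absolutely homogeneous over $\mathcal{R}(T)$; and that it satisfies the triangle inequality. For the first, if $f\in\mathcal{L}^2(T)$ then $|f|^2=f^2\in\mathcal{L}^1(T)=\mathrm{dom}(T)$, so $T|f|^2\in\mathcal{R}(T)_+$; since $\mathcal{R}(T)$ is universally complete with weak order unit $e=Te$, Lemma~\ref{square root} applied inside $\mathcal{R}(T)$ produces $\sqrt{T|f|^2}\in\mathcal{R}(T)_+$, so $\|\cdot\|_{T,2}\colon\mathcal{L}^2(T)\to\mathcal{R}(T)_+$ is well defined. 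If $\|f\|_{T,2}=0$, then $Tf^2=(\sqrt{Tf^2})^2=0$, and strict positivity of $T$ forces $f^2=0$; since $E^u$ is an Archimedean, hence semiprime, $f$-algebra, this gives $f=0$, the converse being immediate.

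For homogeneity, let $a\in\mathcal{R}(T)$ and $f\in\mathcal{L}^2(T)$; then $af\in\mathcal{L}^2(T)$ by Theorem~\ref{Lp(T) are R(T)-modules}, $a^2\in\mathcal{R}(T)$, and the averaging property of $T$ gives $T|af|^2=T(a^2f^2)=a^2\,Tf^2$. As $(|a|\sqrt{Tf^2})^2=a^2\,Tf^2$ with $|a|\sqrt{Tf^2}\in\mathcal{R}(T)_+$, and positive square roots are unique (a short argument from semiprimeness, of the kind appearing at the end of the proof of Lemma~\ref{square root}), we get $\|af\|_{T,2}=|a|\,\|f\|_{T,2}$; the special case $a=\lambda e$, $\lambda\in\R$, gives homogeneity over the reals. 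The same uniqueness yields the multiplicativity $\sqrt{uv}=\sqrt{u}\,\sqrt{v}$ for $u,v\in\mathcal{R}(T)_+$, which I use freely below.

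The substantive step is the triangle inequality, and I would derive it from a conditional Cauchy--Schwarz inequality: for $f,g\in\mathcal{L}^2(T)$ (so that $fg\in\mathcal{L}^1(T)$ and $T|fg|$ is defined),
\begin{align*}
T|fg|\le\sqrt{Tf^2}\,\sqrt{Tg^2}.
\end{align*}
To prove this, put $p:=\sqrt{Tf^2}\,\sqrt{Tg^2}\in\mathcal{R}(T)_+$ and introduce the auxiliary element $h:=\sqrt{Tg^2}\,|f|-\sqrt{Tf^2}\,|g|$, which lies in $\mathcal{L}^2(T)$ by Theorem~\ref{Lp(T) are R(T)-modules}. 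Expanding $h^2$, applying the averaging property of $T$ (the coefficients $Tg^2$, $Tf^2$ and $p$ all lie in $\mathcal{R}(T)$), and using $|f|\,|g|=|fg|$ and $p^2=Tf^2\cdot Tg^2$, one obtains
\begin{align*}
0\le T(h^2)=2p\big(p-T|fg|\big),
\end{align*}
so it remains to cancel the factor $p$. This is the only delicate point, since $p$ need not be invertible. On the band $\{p\}^{d}$ on which $p$ vanishes, the components there of $\sqrt{Tf^2}$ and of $\sqrt{Tg^2}$ are disjoint, so on each of the resulting subbands one of $Tf^2$, $Tg^2$ is zero, whence by strict positivity of $T$ (and since band projections commute with $T$ and with multiplication) one of $f$, $g$ is zero, hence so are $fg$ and $T|fg|$; thus $p-T|fg|=0$ on $\{p\}^{d}$. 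On the complementary band $\{p\}^{dd}$, on which $p$ is a weak order unit and, by the foregoing, on which $T|fg|$ lives, the relation $p\,(p-T|fg|)^{-}=0$ together with semiprimeness forces $(p-T|fg|)^{-}=0$. Hence $T|fg|\le p$.

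Granting the conditional Cauchy--Schwarz inequality, the triangle inequality is immediate: from $fg\le|fg|$ and the averaging property,
\begin{align*}
T(f+g)^2=Tf^2+2T(fg)+Tg^2\le Tf^2+2\sqrt{Tf^2}\,\sqrt{Tg^2}+Tg^2=\big(\sqrt{Tf^2}+\sqrt{Tg^2}\big)^2,
\end{align*}
and applying the monotonicity of the square root recorded after Lemma~\ref{square root}, together with $\sqrt{x^2}=x$ for $x\in\mathcal{R}(T)_+$, yields $\|f+g\|_{T,2}\le\|f\|_{T,2}+\|g\|_{T,2}$. The main obstacle is the conditional Cauchy--Schwarz inequality, and within it the degenerate-band bookkeeping needed to divide out $p$; this is precisely where the strict positivity of $T$ and the semiprimeness of $E^u$ enter essentially.
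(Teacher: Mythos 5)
Your proof is correct, and the first two norm axioms (definiteness via strict positivity of $T$, and $\mathcal{R}(T)$-homogeneity via the averaging property) are handled exactly as in the paper. Where you genuinely diverge is the triangle inequality: the paper does not prove it at all but appeals to Azouzi--Trabelsi, where it is obtained from functional calculus for convex mappings, whereas you derive it in a self-contained way from a conditional Cauchy--Schwarz inequality $T|fg|\le\sqrt{Tf^2}\,\sqrt{Tg^2}$ proved by expanding $T(h^2)\ge 0$ for $h=\sqrt{Tg^2}\,|f|-\sqrt{Tf^2}\,|g|$ and then cancelling the non-invertible factor $p=\sqrt{Tf^2}\,\sqrt{Tg^2}$ by a band decomposition ($p-T|fg|$ vanishes on $\{p\}^{d}$ by strict positivity, and $p\,(p-T|fg|)^{-}=0$ forces $(p-T|fg|)^{-}=0$ on $\{p\}^{dd}$ by semiprimeness). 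Note that the inequality you prove is precisely the $(p,q)=(2,2)$ case of Theorem~\ref{Holder's inequality}, which the paper records immediately afterwards (again by citation), so you have in effect supplied a direct proof of that result too; your cancellation argument is sound because all the bands involved are generated by elements of $\mathcal{R}(T)$, so the corresponding projections commute with $T$ and with multiplication. The trade-off is the usual one: your route is elementary and stays entirely within the machinery already developed in Section 2 (the module structure of Theorem~\ref{Lp(T) are R(T)-modules}, the square root of Lemma~\ref{square root} and its monotonicity, uniqueness of positive square roots in the semiprime $f$-algebra $E^{u}$), at the cost of the delicate band bookkeeping, while the functional-calculus approach of the cited source generalizes uniformly to all $p\in(1,\infty)$, which is why the paper adopts it.
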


\begin{proof}
For $f \in \mathcal{L}^{2}(T)$, using the strict positivity of $T$,
\begin{align*}
f = 0 \Leftrightarrow \left|f\right|^{2} = 0 \Leftrightarrow T\left|f\right|^{2} = 0 \Leftrightarrow \sqrt{T|f|^{2}} \, = 0 \Leftrightarrow \left\Vert f \right\Vert_{T,2} = 0.
\end{align*}
For $f \in \mathcal{R}(T)$ and $g \in \mathcal{L}^{2}(T)$, we have $f^{2} \in \mathcal{R}(T)$ and $g^{2} \in \mathcal{L}^{1}(T)$, and as $T$ is an averaging operator,
\begin{align*}
\left\Vert fg \right\Vert_{T,2} = \sqrt{T|fg|^{2}} \, = \sqrt{T(|f|^{2} |g|^{2})} \, = \sqrt{|f|^{2} T|g|^{2}} \, = |f| \sqrt{T|g|^{2}} \, = |f| \left\Vert g \right\Vert_{T,2}.
\end{align*}
For the triangle inequality, we appeal to \cite{trabelsi}, in which functional calculus for convex mappings is used to prove the result. In particular, for $f, g \in \mathcal{L}^{2}(T)$,
\begin{align*}
\left\Vert f + g \right\Vert_{T,2} \leq \left\Vert f \right\Vert_{T,2} + \left\Vert g \right\Vert_{T,2}. \tag*{\qed}
\end{align*}
\end{proof}

The following is a special case of H\"{o}lder's inequality, which is proved in \cite{trabelsi, mixing}.

\begin{thm}
\label{Holder's inequality}
If $f \in \mathcal{L}^{p}(T)$ and $g \in \mathcal{L}^{q}(T)$, $(p,q) \in \{(1,\infty), (2,2)\}$, then
\begin{align*}
\Vert fg \Vert_{T,1} \leq \Vert f \Vert_{T,p} \Vert g \Vert_{T,q}.
\end{align*}
\end{thm}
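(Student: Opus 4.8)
The plan is to handle the two admissible pairs $(p,q)$ separately; both reductions rest on the $f$-algebra structure of $E^{u}$, the positivity of $T$, and the averaging property of $T$ recalled in Section~2.

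For the pair $(1,\infty)$ the argument is essentially immediate. Set $h=\Vert g\Vert_{T,\infty}\in\mathcal{R}(T)_{+}$, so that $|g|\le h$ by definition of the $T$-conditional $\infty$-norm. Multiplying by $|f|\ge 0$ in the $f$-algebra $E^{u}$ gives $|fg|=|f|\,|g|\le |f|h$, and since $h\in\mathcal{R}(T)$ while $fg\in\mathcal{L}^{1}(T)$ (the $\mathcal{R}(T)$-module property of $\mathcal{L}^{1}(T)$ recalled from \cite{mixing}), applying $T$ and then its averaging property yields $\Vert fg\Vert_{T,1}=T|fg|\le T(|f|h)=h\,T|f|=\Vert f\Vert_{T,1}\Vert g\Vert_{T,\infty}$.

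For the pair $(2,2)$ the statement is a conditional Cauchy--Schwarz inequality, and this is where the work lies. By Theorem~\ref{Lp(T) are R(T)-modules} and the remark following Definition~\ref{LpT spaces}, for $f,g\in\mathcal{L}^{2}(T)$ each of $|f|^{2}$, $|g|^{2}$, $|f|\,|g|$ lies in $\mathcal{L}^{1}(T)$, so $A=T|f|^{2}$, $B=T(|f|\,|g|)$ and $C=T|g|^{2}$ all lie in $\mathcal{R}(T)_{+}$. For every $t\in\R$ the element $(|f|-t|g|)^{2}$ is positive, hence applying the positive operator $T$ gives
\begin{align*}
0\le T\big((|f|-t|g|)^{2}\big)=A-2tB+t^{2}C
\end{align*}
in $\mathcal{R}(T)$. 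The goal is to deduce the ``discriminant'' inequality $B^{2}\le AC$, and then take square roots, invoking the square-root construction of Lemma~\ref{square root} together with the monotonicity noted immediately after its proof (and the identity $\sqrt{x^{2}}=x$ for $x\ge 0$, with $\sqrt{AC}=\sqrt{A}\,\sqrt{C}$ by commutativity), to obtain $\Vert fg\Vert_{T,1}=B\le\sqrt{A}\,\sqrt{C}=\Vert f\Vert_{T,2}\Vert g\Vert_{T,2}$.

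The main obstacle is precisely the passage from ``$A-2tB+t^{2}C\ge 0$ for all real $t$'' to ``$B^{2}\le AC$'' inside the $f$-algebra $\mathcal{R}(T)$: over $\R$ this is the classical nonnegative-discriminant fact, but here it must be carried out componentwise. One clean route is to use that $\mathcal{R}(T)$ is a universally complete $f$-algebra and hence is representable as an order-dense ideal of some $C_{\infty}(K)$, so that the scalar inequality can be applied pointwise on $K$; alternatively one band-decomposes $\mathcal{R}(T)$, works on the carrier band of $C$ where the choice $t=C^{-1}B$ gives $0\le A-C^{-1}B^{2}$, and uses strict positivity of $T$ on the complementary band (there $g$ vanishes, so $B=0$ and $B^{2}=0=AC$). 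Either way the remaining checks — that the relevant products land in $\mathcal{L}^{1}(T)$ and that the square root respects order — are routine given Lemma~\ref{square root} and the module properties already recorded, which is why we may simply appeal to \cite{trabelsi, mixing}.
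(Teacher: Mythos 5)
Your proposal is correct in substance, but note that the paper does not actually prove this theorem at all: it is stated as ``a special case of H\"older's inequality, which is proved in \cite{trabelsi, mixing}'', where the general result for $T$-conditional norms is obtained with the functional-calculus machinery also invoked for the triangle inequality in Theorem \ref{riesz space norms}. Your argument is therefore a genuinely different, self-contained and more elementary route covering exactly the two pairs needed. The $(1,\infty)$ case is complete as written: dominate $|g|$ by $\Vert g\Vert_{T,\infty}\in\mathcal{R}(T)_{+}$ and use the averaging property. For $(2,2)$ the quadratic trick works, but two details deserve care. First, you cannot literally ``choose $t=C^{-1}B$'' in the inequality $A-2tB+t^{2}C\ge 0$, since that inequality was derived only for real scalars $t$; you must re-expand $T\big((|f|-u|g|)^{2}\big)$ for $u\in\mathcal{R}(T)_{+}$, using the averaging property $T(uh)=u\,Th$ to obtain $A-2uB+u^{2}C\ge 0$, and only then substitute $u=C^{-1}B$ on the carrier band of $C$ (the inverse existing there because $\mathcal{R}(T)$ is universally complete); your $C_{\infty}(K)$ representation alternative sidesteps this at the cost of a routine density argument on the set where the functions are finite. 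Second, the final step needs not only monotonicity of the square root (noted after Lemma \ref{square root}) but also uniqueness of positive square roots in the semiprime $f$-algebra $E^{u}$ to identify $\sqrt{AC}$ with $\sqrt{A}\,\sqrt{C}$. With those points made explicit, your proof stands; what it buys over the paper's citation is an elementary argument, what it loses is the general $(p,q)$ case that the functional-calculus approach of \cite{trabelsi} delivers.
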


Note that $\mathcal{L}^{\infty}(T) \subset \mathcal{L}^{2}(T) \subset \mathcal{L}^{1}(T)$. The following is a special case of Lyapunov's inequality.

\begin{thm}
\label{riesz space lyapunov's inequality}
If $f \in \mathcal{L}^{2}(T)$ and $g \in \mathcal{L}^{\infty}(T)$, then
\begin{align*}
\Vert f \Vert_{T,1} \leq \Vert f \Vert_{T,2} \text{ and } \Vert g \Vert_{T,2} \leq \Vert g \Vert_{T,\infty}.
\end{align*}
\end{thm}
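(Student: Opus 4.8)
The plan is to prove the two inequalities separately, in each case reducing the $\mathcal{R}(T)$-valued statement to the monotonicity of $T$ together with the monotonicity of the square root recorded after Lemma \ref{square root}.

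For $\Vert f\Vert_{T,1}\le\Vert f\Vert_{T,2}$ I would invoke the case $(p,q)=(2,2)$ of H\"older's inequality (Theorem \ref{Holder's inequality}) with the second factor taken to be the weak order unit $e$. Since $e=Te\in\mathcal{R}(T)_+$ and $e^2=e$, we have $e\in\mathcal{L}^\infty(T)\subseteq\mathcal{L}^2(T)$ and
\[
\Vert e\Vert_{T,2}=\sqrt{T(e^2)}=\sqrt{Te}=\sqrt{e}=e,
\]
the last equality holding because $e$ is the multiplicative identity of the $f$-algebra $E^u$. As $fe=f$, H\"older's inequality then gives $\Vert f\Vert_{T,1}=\Vert fe\Vert_{T,1}\le\Vert f\Vert_{T,2}\,\Vert e\Vert_{T,2}=\Vert f\Vert_{T,2}$. (Alternatively, Grobler's Riesz space Jensen inequality applied to $x\mapsto x^2$ yields $(T|f|)^2\le T|f|^2$, and taking square roots gives the same conclusion; the H\"older route keeps everything within results already quoted in this section.)

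For $\Vert g\Vert_{T,2}\le\Vert g\Vert_{T,\infty}$ I would set $h=\Vert g\Vert_{T,\infty}=\inf\{u\in\mathcal{R}(T)_+:|g|\le u\}$. The set over which this infimum is taken is nonempty, since $g\in\mathcal{L}^\infty(T)$, and downward directed in $\mathcal{R}(T)$, since $u_1,u_2$ in it forces $u_1\wedge u_2\in\mathcal{R}(T)_+$ with $|g|\le u_1\wedge u_2$; hence by Dedekind completeness of $\mathcal{R}(T)$ the infimum $h$ exists in $\mathcal{R}(T)_+$ and $|g|\le h$. Squaring preserves order on the positive cone of $E^u$ (as $h^2-|g|^2=(h-|g|)(h+|g|)\ge 0$), so $|g|^2\le h^2$; applying the positive operator $T$ and using $Th^2=h^2$ (because $h^2\in\mathcal{R}(T)$) gives $T|g|^2\le h^2$; and finally monotonicity of the square root yields $\Vert g\Vert_{T,2}=\sqrt{T|g|^2}\le\sqrt{h^2}=h=\Vert g\Vert_{T,\infty}$.

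Both arguments are essentially routine. The points needing a little care are the verification that the defining infimum for $\Vert\cdot\Vert_{T,\infty}$ is attained (handled by the downward-directedness above) and the bookkeeping that every product formed and every element to which $T$ is applied lies in the appropriate space — $e,h^2\in\mathcal{R}(T)\subseteq\mathcal{L}^1(T)$ and $|g|^2\in\mathcal{L}^\infty(T)$ — so that no step leaves $\mathcal{L}^1(T)=\mathrm{dom}(T)$. The only genuine structural input is that $E^u$ is an Archimedean $f$-algebra with positive, order continuous multiplication, which is what makes squaring and the square root order preserving; I expect this, rather than any computation, to be the main (and mild) obstacle.
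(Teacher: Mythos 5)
Your proposal is correct and follows essentially the same route as the paper: the first inequality via H\"older's inequality (Theorem \ref{Holder's inequality}) with the second factor equal to $e$ and $\Vert e\Vert_{T,2}=e$, and the second by squaring $|g|\le h=\Vert g\Vert_{T,\infty}$, applying $T$ with $Th^2=h^2$, and taking square roots. The extra bookkeeping you supply (attainment of the infimum, membership checks) only elaborates steps the paper leaves implicit.
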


\begin{proof}
The first inequality follows as a corollary to Theorem \ref{Holder's inequality} by setting $g=e$ therein. For the second inequality, let $g \in \mathcal{L}^{\infty}(T)$, then $|g| \leq h := \Vert g \Vert_{T,\infty} \in \mathcal{R}(T)_{+}$, so $|g|^{2} \leq h^{2} \in \mathcal{R}(T)$, since $\mathcal{R}(T)$ is universally complete. Then, as $T$ is positive, $T|g|^{2} \leq Th^{2} = h^{2}$. Therefore $\Vert g \Vert_{T,2} = \sqrt{T|g|^{2}} \, \leq \sqrt{h^{2}} = h = \Vert g \Vert_{T,\infty}$. \qed
\end{proof}

Next, we will prove a variant of the conditional Jensen's inequality. For additional details on conditional Jensen's inequalities in Riesz spaces, see \cite{jensen's inequality}.

\begin{thm}
\label{jensen's inequality}
Let $S$ be a conditional expectation operator on $\mathcal{L}^{1}(T)$ compatible with $T$ (in the sense that $ST = TS = T$), then for $f \in \mathcal{L}^{p}(T)$, $p \in \{1,2,\infty\}$,
\begin{align*}
\Vert Sf \Vert_{T,p} \leq \Vert f \Vert_{T,p}.
\end{align*}
\end{thm}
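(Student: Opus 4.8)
The plan is to reduce all three cases to the single pointwise estimate $|Sf|\le S|f|$, obtained by applying the positive operator $S$ to the inequalities $-|f|\le f\le |f|$, together with the compatibility hypotheses $TS=T$, $ST=T$ and the monotonicity of the three $T$-conditional norms. Note at the outset that $S$ maps $\mathcal{L}^1(T)$ into itself by hypothesis, and that $ST=T$ forces every element of $\mathcal{R}(T)$ to be fixed by $S$: if $g\in\mathcal{R}(T)$ then $g=Tg$, whence $Sg=S(Tg)=Tg=g$.

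For $p=1$ the bound is immediate: $\Vert Sf\Vert_{T,1}=T|Sf|\le T(S|f|)=(TS)|f|=T|f|=\Vert f\Vert_{T,1}$. For $p=\infty$, set $h=\Vert f\Vert_{T,\infty}\in\mathcal{R}(T)_+$, so that $|f|\le h$; applying $S$ and using $Sh=h$ gives $|Sf|\le S|f|\le Sh=h$, which simultaneously shows $Sf\in\mathcal{L}^\infty(T)$ and $\Vert Sf\Vert_{T,\infty}\le h=\Vert f\Vert_{T,\infty}$.

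The case $p=2$ is the substantive one. Here the goal is $T|Sf|^2\le T|f|^2$, since the square root is monotone (the observation immediately following Lemma \ref{square root}), and then $\Vert Sf\Vert_{T,2}=\sqrt{T|Sf|^2}\le\sqrt{T|f|^2}=\Vert f\Vert_{T,2}$. In view of $|Sf|\le S|f|$ it suffices to establish the conditional Jensen inequality $(S|f|)^2\le S(|f|^2)$ for the convex map $t\mapsto t^2$; then applying $T$ and using $TS=T$ yields $T(S|f|)^2\le T(S|f|^2)=T|f|^2$, while $|Sf|^2\le(S|f|)^2\le S(|f|^2)\in\mathcal{L}^1(T)$ also certifies $Sf\in\mathcal{L}^2(T)$. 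The inequality $(Sg)^2\le S(g^2)$ for $g=|f|\in\mathcal{L}^2(T)_+$ can either be quoted from Grobler's Riesz-space Jensen inequality \cite{jensen's inequality}, or be proved directly: for the bounded truncations $g_n=g\wedge ne\in\mathcal{L}^\infty(T)$ one has $Sg_n\in\mathcal{L}^\infty(T)$, every product in the following expansion lies in $\mathcal{L}^1(T)$, and the averaging property of $S$ together with $Sg_n\in\mathcal{R}(S)$ gives $S((g_n-Sg_n)^2)=S(g_n^2)-2S(g_n\cdot Sg_n)+S((Sg_n)^2)=S(g_n^2)-(Sg_n)^2\ge 0$; letting $n\to\infty$ with $g_n\uparrow g$, $g_n^2\uparrow g^2\in\mathcal{L}^1(T)$, and invoking the order continuity of $S$ and of multiplication in $E^u$ passes this to $S(g^2)\ge(Sg)^2$.

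I expect the main obstacle to be exactly this $p=2$ step: securing $(S|f|)^2\le S(|f|^2)$ while simultaneously tracking integrability — namely that $Sf\in\mathcal{L}^2(T)$ and that each cross term in the expansion of $(g-Sg)^2$ lies in $\mathcal{L}^1(T)$, so that $S$ may legitimately be applied and the monotone limit taken. The truncation device is what keeps this bookkeeping clean; the remaining content of the proof is a formal manipulation of the compatibility relations $TS=T$ and $ST=T$.
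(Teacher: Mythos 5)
Your proof is correct, and its top-level skeleton matches the paper's: reduce to $|Sf|\le S|f|$, establish $(S|f|)^{2}\le S|f|^{2}$, then apply $T$ and use $TS=T$. The difference lies in how the key intermediate inequality and the side conditions are discharged. The paper disposes of $p\in\{1,\infty\}$ by citation to \cite{mixing}, obtains $Sf\in\mathcal{L}^{2}(T)$ by citation to \cite{bern}, and derives $(S|f|)^{2}\le S|f|^{2}$ in one line by applying Theorem \ref{riesz space lyapunov's inequality} with $T$ replaced by $S$ --- i.e.\ it leans on the Lyapunov/H\"older machinery (ultimately the functional calculus of \cite{trabelsi}). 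You instead prove all three cases from scratch, and for $p=2$ you establish the conditional Jensen inequality $(Sg)^{2}\le S(g^{2})$ directly via the ``variance is nonnegative'' identity $S\bigl((g_{n}-Sg_{n})^{2}\bigr)=S(g_{n}^{2})-(Sg_{n})^{2}\ge 0$ for the truncations $g_{n}=g\wedge ne$, followed by a monotone passage to the limit using order continuity of $S$ and of multiplication in $E^{u}$; this simultaneously yields $Sf\in\mathcal{L}^{2}(T)$ as a byproduct, so you avoid the external citation. Your route is more elementary and self-contained (the averaging property and positivity are all that is used), at the cost of the truncation bookkeeping; the paper's route is shorter but imports the $\mathcal{L}^{p}$-norm inequalities for $S$ rather than just for $T$. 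Both are sound.
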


\begin{proof}
The result for $p \in \{1, \infty\}$ was proved in \cite{mixing}. Consider $f \in \mathcal{L}^{2}(T)$, then $Sf \in \mathcal{L}^{2}(T)$, as shown in \cite{bern}. By the positivity of $S$, we have $|Sf| \leq S|f|$, and by applying Theorem \ref{riesz space lyapunov's inequality} with $T$ replaced by $S$, we have $(S|f|)^{2} \leq S|f|^{2}$. Therefore
\begin{align*}
\Vert Sf \Vert_{T,2}^{2} = T|Sf|^{2} \leq T(S|f|)^{2} \leq TS|f|^{2} = T|f|^{2} = \Vert f \Vert_{T,2}^{2}. \tag*{\qed}
\end{align*}
\end{proof}

Similar results can be proved in the above manner for $p\in(1,\infty)$ and $q=1/p$.

\newsection{Mixing and mixingales}

In this section we list all pertinent results from \cite{mixing, mix} relating to the notions of mixing and mixingales. For the following definition, we denote by $\mathcal{B}(S)$, for a conditional expectation operator $S$ on a Riesz space $E$, the class of band projections on $E$ such that $P \in \mathcal{B}(S)$ if and only if $Pe \in \mathcal{R}(S)$.

\begin{defs}
\label{riesz space mixing coefficients}
Let $E$ be a Dedekind complete Riesz space with strictly positive conditional expectation operator $T$ and weak order unit $e = Te$, and let $U$ and $V$ be conditional expectation operators on $E$ compatible with $T$.
\begin{enumerate}
\item[(i)] The $T$-conditional strong mixing coefficient
\index{$T$-conditional mixing coefficient!strong}
between $U$ and $V$ is given by
\begin{align*}
\alpha_{T}(U,V) = \sup{\big\{ \left| TPQe - TPe \cdot TQe \right| : P \in \mathcal{B}(U), Q \in \mathcal{B}(V) \big\}}.
\end{align*}
\item[(ii)] The $T$-conditional uniform mixing coefficient
\index{$T$-conditional mixing coefficient!uniform}
between $U$ and $V$ is given by
\begin{align*}
\varphi_{T}(U,V) = \sup{\big\{ \left\Vert UQe - TQe \right\Vert_{T,\infty} : Q \in \mathcal{B}(V) \big\}}.
\end{align*}
\end{enumerate}
\end{defs}

The mixing coefficients outlined above can be interpreted as loosely measuring the dependence between the conditional expectation operators $U$ and $V$. In particular, if $U$ and $V$ are $T$-conditionally independent (in the sense that $UV = VU = T$), then it is easy to see that $\alpha_{T}(U,V) = \varphi_{T}(U,V) = 0$. The converse, however, holds only in the case of uniform mixing. This suggests that strong mixing provides a relatively weaker measure of independence, which is substantiated by the following result from \cite{mixing}.

\begin{lem}
\label{mixing coefficients inequality}
Let $E$ be a $T$-universally complete Riesz space with weak order unit $e = Te$, where $T$ is a strictly positive conditional expectation operator on $E$, and let $U$ and $V$ be conditional expectation operators on $E$ compatible with $T$, then
\begin{align*}
\alpha_{T}(U,V) \leq \varphi_{T}(U,V).
\end{align*}
\end{lem}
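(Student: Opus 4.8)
The plan is to fix band projections $P\in\mathcal{B}(U)$ and $Q\in\mathcal{B}(V)$ and prove the pointwise bound
\[
\big|TPQe-TPe\cdot TQe\big|\le\|UQe-TQe\|_{T,\infty};
\]
taking the supremum over all such $P$ and $Q$ and recalling the definitions of $\alpha_T$ and $\varphi_T$ then gives the claim, since $\varphi_T(U,V)$ is precisely the supremum over $Q\in\mathcal{B}(V)$ of the right-hand side.

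First I would move to the $f$-algebra picture on $E^u$ (with unit $e$), in which each band projection $R$ acts as multiplication by the component $Re$ of $e$; in particular $PQe=(Pe)(Qe)$, and since $Pe,Qe,UQe,TQe$ are all dominated by $e$ they lie in $\mathcal{L}^\infty(T)$, so every product below is well defined. Because $P\in\mathcal{B}(U)$ we have $Pe\in\mathcal{R}(U)$, and since $U$ is compatible with $T$ we have $TU=T$; as $U$ is an averaging operator this gives
\[
TPQe=TU\big((Pe)(Qe)\big)=T\big((Pe)\,UQe\big).
\]
Similarly $TQe\in\mathcal{R}(T)$ and $T$ is averaging, so $TPe\cdot TQe=T\big((Pe)(TQe)\big)$. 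Subtracting yields the key identity
\[
TPQe-TPe\cdot TQe=T\Big((Pe)\big(UQe-TQe\big)\Big).
\]

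The remaining estimate is routine. Set $h:=\|UQe-TQe\|_{T,\infty}\in\mathcal{R}(T)_+$, so that $|UQe-TQe|\le h$. Since $Pe\ge0$ and $T$ is positive and averaging,
\[
\big|TPQe-TPe\cdot TQe\big|\le T\big((Pe)\,|UQe-TQe|\big)\le T\big((Pe)\,h\big)=h\cdot TPe.
\]
Finally $0\le TPe\le Te=e$, so $h\cdot TPe\le h\cdot e=h=\|UQe-TQe\|_{T,\infty}\le\varphi_T(U,V)$, which completes the argument after taking suprema over $P\in\mathcal{B}(U)$ and $Q\in\mathcal{B}(V)$.

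The only genuinely delicate point is the first step: realizing band projections as multiplications in the extended $f$-algebra and checking that the averaging identity $U(fg)=f\,Ug$ applies, i.e.\ that the factor $f=Pe$ really lies in $\mathcal{R}(U)$ and that $fg$ stays in the common domain. Once the two factorizations $TPQe=T((Pe)\,UQe)$ and $TPe\cdot TQe=T((Pe)\,TQe)$ are in place, everything else is a one-line positivity computation using the definition of $\|\cdot\|_{T,\infty}$ and $TPe\le e$.
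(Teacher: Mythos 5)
Your proof is correct. Note that the paper itself offers no proof of this lemma --- it is recalled from the reference \cite{mixing} --- but your argument is precisely the standard one used there: realize the band projections as multiplications by components of $e$, use compatibility ($TU=T$) and the averaging property to obtain the factorization $TPQe-TPe\cdot TQe=T\big((Pe)(UQe-TQe)\big)$, and then estimate by positivity using $|UQe-TQe|\le\Vert UQe-TQe\Vert_{T,\infty}\in\mathcal{R}(T)_+$ and $TPe\le e$. The only implicit fact worth flagging is that $Ue=e$ (so that $|UQe|\le e$ and $UQe-TQe\in\mathcal{L}^{\infty}(T)$), which follows immediately from compatibility since $Ue=UTe=Te=e$.
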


The following theorems relate to what are known as mixing inequalities, which were proved in \cite{mixing}, and are used to establish the relationship between mixingales and near-epoch dependence, see Theorem \ref{NED on mixing is mixingale}.

\begin{thm}
\label{alpha mixing inequality}
Let $E$ be a $T$-universally complete Riesz space with weak order unit $e=Te$, where $T$ is a strictly positive conditional expectation operator on $E$, and let $U$ and $V$ be conditional expectation operators on $E$ compatible with $T$. Then for $f \in \mathcal{R}(V) \cap \mathcal{L}^{\infty}(T)$,
\begin{align*}
\left\Vert Uf - Tf \right\Vert_{T,1} \leq 4 \alpha_{T}(U,V) \left\Vert f \right\Vert_{T,\infty}.
\end{align*}
\end{thm}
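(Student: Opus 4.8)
\emph{Proof sketch.} Write $\alpha := \alpha_T(U,V)$ and $g := Uf - Tf$. The plan is to rewrite $\Vert g\Vert_{T,1} = T|g|$ as a $T$-conditional covariance involving a band-projection element compatible with $U$, and then to estimate that covariance by descending --- via a Freudenthal-type approximation --- to the defining quantities of $\alpha$, which are indexed precisely by pairs of band projections in $\mathcal{B}(U)\times\mathcal{B}(V)$.

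First I would record the set-up. Compatibility gives $\mathcal{R}(T)\subseteq\mathcal{R}(U)\cap\mathcal{R}(V)$ (if $x=Ty$ then $Ux=UTy=Ty=x$, and likewise for $V$), so $g\in\mathcal{R}(U)$. Let $P$ be the band projection onto the band generated by $g^{+}$; since $g\in\mathcal{R}(U)$ one has $P\in\mathcal{B}(U)$, hence also $I-P\in\mathcal{B}(U)$. Put $s:=(2P-I)e=Pe-(I-P)e\in\mathcal{R}(U)$; then $|s|\le e$, so $s\in\mathcal{L}^{\infty}(T)$ with $\Vert s\Vert_{T,\infty}\le e$, and $sg=2Pg-g=g^{+}+g^{-}=|g|$. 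Using $s\in\mathcal{R}(U)$, $Tf\in\mathcal{R}(T)$, the averaging property, and $TU=T$,
\begin{align*}
\Vert Uf - Tf\Vert_{T,1} = T(sg) = T\big(U(sf)\big) - T\big((Tf)s\big) = T(sf) - Ts\cdot Tf.
\end{align*}

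It therefore suffices to bound, for band projections $Q\in\mathcal{B}(U)$, the quantity $\Phi_{Q}(f):=T(Qe\cdot f)-T(Qe)\cdot Tf$, since the display equals $\Phi_{P}(f)-\Phi_{I-P}(f)$ and so $\Vert Uf-Tf\Vert_{T,1}\le|\Phi_{P}(f)|+|\Phi_{I-P}(f)|$. I would prove $|\Phi_{Q}(f)|\le 2\alpha\Vert f\Vert_{T,\infty}$ in three steps. (i) If $0\le f\le e$ and $f\in\mathcal{R}(V)$, approximate $f$ from below by the Freudenthal sums $f_{n}:=\frac{1}{2^{n}}\sum_{k=1}^{2^{n}}Q_{k}e$ with $Q_{k}:=I-P_{(\frac{k}{2^{n}}e-f)^{+}}\in\mathcal{B}(V)$ (using $e,f\in\mathcal{R}(V)$); then $f_{n}\uparrow f$, $Qe\cdot Q_{k}e=QQ_{k}e$, and by linearity $\Phi_{Q}(f_{n})=\frac{1}{2^{n}}\sum_{k=1}^{2^{n}}\big(TQQ_{k}e-TQe\cdot TQ_{k}e\big)$, each summand being bounded in modulus by $\alpha$ by the definition of $\alpha_{T}(U,V)$; hence $|\Phi_{Q}(f_{n})|\le\alpha$, and order continuity of $T$ and of the multiplication gives $\Phi_{Q}(f_{n})\to\Phi_{Q}(f)$, so $|\Phi_{Q}(f)|\le\alpha$. (ii) If $-e\le f\le e$ and $f\in\mathcal{R}(V)$, apply (i) to $\frac{1}{2}(f+e)$ and use that $\Phi_{Q}$ is linear in $f$ with $\Phi_{Q}(e)=0$, to get $|\Phi_{Q}(f)|\le 2\alpha$. (iii) For general $f\in\mathcal{R}(V)\cap\mathcal{L}^{\infty}(T)$, set $h:=\Vert f\Vert_{T,\infty}\in\mathcal{R}(T)_{+}$, so $|f|\le h$; on the band complementary to the one generated by $h$ we have $f=0$, while on that band $h$ is invertible since $\mathcal{R}(T)$ is universally complete, so $\tilde f:=h^{-1}f$ lies in $\mathcal{R}(V)$ and satisfies $|\tilde f|\le e$; the averaging property gives $\Phi_{Q}(f)=\Phi_{Q}(h\tilde f)=h\,\Phi_{Q}(\tilde f)$, whence $|\Phi_{Q}(f)|=h|\Phi_{Q}(\tilde f)|\le 2\alpha h$ by (ii). Combining the two band-projection contributions yields $\Vert Uf-Tf\Vert_{T,1}\le 2\cdot 2\alpha\Vert f\Vert_{T,\infty}=4\alpha_{T}(U,V)\Vert f\Vert_{T,\infty}$.

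I expect the main obstacle to be step (iii): because $\Vert f\Vert_{T,\infty}$ is an $\mathcal{R}(T)_{+}$-valued quantity rather than a scalar, one cannot simply normalise $f$ to a unit ball, and the reduction must instead be performed ``fibrewise'' using the universal completeness of $\mathcal{R}(T)$ to invert $h$ on the band it generates. A second, more routine, point needing care is checking that the band projections used --- $P$, built from $g\in\mathcal{R}(U)$, and each $Q_{k}$, built from $(\frac{k}{2^{n}}e-f)^{+}\in\mathcal{R}(V)$ --- genuinely lie in $\mathcal{B}(U)$ and $\mathcal{B}(V)$; this is where compatibility of $U$ and $V$ with $T$, together with $\mathcal{R}(T)\subseteq\mathcal{R}(U)\cap\mathcal{R}(V)$, is used.
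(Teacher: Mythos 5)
The paper does not prove this theorem itself; it is quoted from the cited reference \cite{mixing}, where the argument is exactly the classical covariance-inequality route you have reconstructed: reduce $T|Uf-Tf|$ to covariance terms $\Phi_{P}(f)$ and $\Phi_{I-P}(f)$ via the sign element $(2P-I)e$ with $P\in\mathcal{B}(U)$, approximate $f$ by Freudenthal sums of components $Q_{k}e$ with $Q_{k}\in\mathcal{B}(V)$ to get the factor $2\alpha$ per term, and normalise by the $\mathcal{R}(T)_{+}$-valued bound $\Vert f\Vert_{T,\infty}$ using the universal completeness of $\mathcal{R}(T)$. Your proposal is correct and takes essentially the same approach.
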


\begin{thm}
\label{phi mixing inequality}
Let $E$ be a $T$-universally complete Riesz space with weak order unit $e = Te$, where $T$ is a strictly positive conditional expectation operator on $E$, and let $U$ and $V$ be conditional expectation operators on $E$ compatible with $T$. Then for $f \in \mathcal{R}(V) \cap \mathcal{L}^{\infty}(T)$,
\begin{align*}
\left\Vert Uf - Tf \right\Vert_{T,1} \leq \left\Vert Uf - Tf \right\Vert_{T,\infty} \leq 2 \varphi_{T}(U,V) \left\Vert f \right\Vert_{T,\infty}.
\end{align*}
\end{thm}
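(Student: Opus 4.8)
The first inequality is soft. Since $f\in\mathcal{L}^{\infty}(T)$ we have $|f|\le\Vert f\Vert_{T,\infty}\in\mathcal{R}(T)_{+}$, so by positivity of $U$ and $T$ (and $UT=TU=T$, which forces $U$ and $T$ to fix every element of $\mathcal{R}(T)$), $|Uf-Tf|\le U|f|+T|f|\le 2\Vert f\Vert_{T,\infty}\in\mathcal{R}(T)_{+}$; hence $Uf-Tf\in\mathcal{L}^{\infty}(T)$, and applying Theorem \ref{riesz space lyapunov's inequality} along $\mathcal{L}^{\infty}(T)\subset\mathcal{L}^{2}(T)\subset\mathcal{L}^{1}(T)$ gives $\Vert Uf-Tf\Vert_{T,1}\le\Vert Uf-Tf\Vert_{T,2}\le\Vert Uf-Tf\Vert_{T,\infty}$. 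So the work is in the bound $\Vert Uf-Tf\Vert_{T,\infty}\le 2\varphi_{T}(U,V)\Vert f\Vert_{T,\infty}$, equivalently $|Uf-Tf|\le 2\varphi_{T}(U,V)\Vert f\Vert_{T,\infty}$.

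The plan is to reduce, in two stages, to spectral ``simple'' elements of $\mathcal{R}(V)$. Put $h:=\Vert f\Vert_{T,\infty}\in\mathcal{R}(T)_{+}$ and let $P_{0}\in\mathcal{B}(T)$ be the band projection onto $\{h\}^{d}$; on that band $P_{0}f=0$ and $P_{0}$ commutes with $U$ and $T$, so the inequality is trivial there. On the complementary band $h$ is a strictly positive element of the universally complete $f$-algebra $\mathcal{R}(T)$ (here $T$-universal completeness of $E$ is used), hence invertible there; replacing $f$ by $\tilde f:=h^{-1}f\in\mathcal{R}(V)\cap\mathcal{L}^{\infty}(T)$, which satisfies $|\tilde f|\le e$, and using the averaging identities $U(h\tilde f)=hU\tilde f$, $T(h\tilde f)=hT\tilde f$ together with $\Vert g\Vert_{T,\infty}=g$ for $g\in\mathcal{R}(T)_{+}$, the claim is reduced to: if $f\in\mathcal{R}(V)$ with $|f|\le e$, then $|Uf-Tf|\le 2\varphi_{T}(U,V)$. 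Next, by the Freudenthal spectral theorem I would approximate such an $f$ $e$-uniformly by the simple elements $f_{N}:=-e+\sum_{j=1}^{N}(c_{j}-c_{j-1})\tilde P_{j}e$, where $-1=c_{0}<c_{1}<\dots<c_{N}=1$ has mesh $2/N$ and $\tilde P_{j}:=P_{(f-c_{j-1}e)^{+}}$, so that $|f-f_{N}|\le\tfrac{2}{N}e$. The structural point is that each $\tilde P_{j}$ lies in $\mathcal{B}(V)$: since $f,e\in\mathcal{R}(V)$ and $\mathcal{R}(V)$ is an order-closed Riesz subspace of $E^{u}$ (being the range of the order-continuous operator $V$), we get $(f-c_{j-1}e)^{+}\in\mathcal{R}(V)_{+}$, hence $\tilde P_{j}e=\sup_{n}\big(n(f-c_{j-1}e)^{+}\wedge e\big)\in\mathcal{R}(V)$.

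Finally I would run the telescoping estimate. Because $Ue=Te=e$, the constant term of $f_{N}$ is annihilated by $U-T$, so $Uf_{N}-Tf_{N}=\sum_{j=1}^{N}(c_{j}-c_{j-1})(U\tilde P_{j}e-T\tilde P_{j}e)$; by Definition \ref{riesz space mixing coefficients}, $|U\tilde P_{j}e-T\tilde P_{j}e|\le\Vert U\tilde P_{j}e-T\tilde P_{j}e\Vert_{T,\infty}\le\varphi_{T}(U,V)$ for every $j$, whence $|Uf_{N}-Tf_{N}|\le\big(\sum_{j=1}^{N}(c_{j}-c_{j-1})\big)\varphi_{T}(U,V)=(c_{N}-c_{0})\varphi_{T}(U,V)=2\varphi_{T}(U,V)$. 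Since $|Uf-Tf-(Uf_{N}-Tf_{N})|\le U|f-f_{N}|+T|f-f_{N}|\le\tfrac{4}{N}e\to 0$, taking the order limit gives $|Uf-Tf|\le 2\varphi_{T}(U,V)$, and unwinding the two reductions yields the theorem. The conceptual heart is this layer-cake rewriting: it trades ``$N$ spectral slabs, each controlled by $\varphi_{T}(U,V)$'' for a telescoping sum whose coefficients add up to the length of the spectral range, so the universal constant $2$ appears instead of something growing with $N$. The steps I expect to demand the most care — and the natural places for the argument to slip — are that the spectral band projections of an element of $\mathcal{R}(V)$ again lie in $\mathcal{B}(V)$ (handled by order-closedness of $\mathcal{R}(V)$), and the normalisation step, which rests on invertibility of $\Vert f\Vert_{T,\infty}$ in the universally complete $\mathcal{R}(T)$ and on the $\mathcal{R}(T)$-homogeneity of $\Vert\cdot\Vert_{T,\infty}$.
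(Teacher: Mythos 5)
The paper does not prove this theorem: Section 3 explicitly imports it, together with Theorem \ref{alpha mixing inequality}, from \cite{mixing}, so there is no in-paper argument to compare against. Judged on its own, your proof is correct and essentially complete. The two reductions are sound: compatibility gives $\mathcal{R}(T)\subset\mathcal{R}(V)$, so $h^{-1}f\in\mathcal{R}(V)$ by the averaging property (with $h^{-1}$ existing on the band $\{h\}^{dd}$ because $\mathcal{R}(T)$ is universally complete --- this is where $T$-universal completeness enters --- and the inequality on $\{h\}^{d}$ being trivial since $P_{0}$ commutes with $U$ and $T$); and order-closedness of $\mathcal{R}(V)$ does give $\tilde P_{j}e=\sup_{n}\bigl(n(f-c_{j-1}e)^{+}\wedge e\bigr)\in\mathcal{R}(V)$, hence $\tilde P_{j}\in\mathcal{B}(V)$. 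The genuinely different ingredient is your nested layer-cake decomposition: the classical proof of the uniform-mixing inequality (and its Riesz-space transcription) approximates $f$ by a sum over a \emph{disjoint} $\mathcal{B}(V)$-partition $Q_{1},\dots,Q_{N}$, uses $\sum_{i}(UQ_{i}e-TQ_{i}e)=0$, and extracts the constant $2$ from $\sum_{i}|x_{i}|=2\sum_{i}x_{i}^{+}$ with the positive part absorbed into a single aggregated projection; you instead use \emph{nested} projections with positive coefficients summing to $c_{N}-c_{0}=2$, so the constant falls out of the length of the normalised range and no sign-splitting is needed, at the price of the mesh estimate $|f-f_{N}|\le\tfrac{2}{N}e$, which you verify correctly. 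If you write this up, make explicit that $\varphi_{T}(U,V)$ exists in $\mathcal{R}(T)_{+}$ (the defining set is bounded above by $2e$ and $\mathcal{R}(T)$ is universally complete), so that the product $h\,\varphi_{T}(U,V)$ in the final unwinding is a legitimate $f$-algebra multiplication and the homogeneity $\Vert h g\Vert_{T,\infty}=h\Vert g\Vert_{T,\infty}$ applies.
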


Next, we provide a Riesz space analogue of mixing processes.
This requires an extension of Definition \ref{riesz space mixing coefficients} to sequences.

\begin{defs}
\label{riesz space mixing sequences}
Let $E$ be a $T$-universally complete Riesz space with weak order unit $e = Te$, where $T$ is a strictly positive conditional expectation operator on $E$, and let $(T_{n})_{n \in \mathbb{Z}}$ be a sequence of conditional expectation operators on $E$ compatible with $T$. Define, for $m \in \mathbb{N}$,
\begin{enumerate}
\item[(i)] $\alpha_{T,m} = \sup{\{\alpha_{T}(T_{-\infty}^{n},T_{\, n+m}^{\infty}) : n \in \mathbb{Z} \}}$,
\item[(ii)] $\varphi_{T,m} = \sup{\{\varphi_{T}(T_{-\infty}^{n},T_{\, n+m}^{\infty}) : n \in \mathbb{Z} \}}$,
\end{enumerate}
where $T_{-\infty}^{n}$ and $T_{\, n+m}^{\infty}$ are conditional expectation operators on $E$ compatible with $T$ having $\mathcal{R}(T_{-\infty}^{n}) = \left\langle \cup_{i \leq n} \mathcal{R}(T_{i}) \right\rangle$ and $\mathcal{R}(T_{\, n+m}^{\infty}) = \left\langle \cup_{i \geq n+m} \mathcal{R}(T_{i}) \right\rangle$. Here $\left\langle \cup_{i \leq n} \mathcal{R}(T_{i}) \right\rangle$ and $\left\langle \cup_{i \geq n+m} \mathcal{R}(T_{i}) \right\rangle$ denote the smallest Riesz subspaces of $E$ containing $\cup_{i \leq n} \mathcal{R}(T_{i})$ and $\cup_{i \geq n+m} \mathcal{R}(T_{i})$, respectively, which are closed with respect to order limits in $E$.
\end{defs}

\begin{defs}
\label{riesz space mixing operators}
Let $E$ be a $T$-universally complete Riesz space with weak order unit $e = Te$, where $T$ is a strictly positive conditional expectation operator on $E$. The sequence $(T_{n})_{n \in \mathbb{Z}}$ of conditional expectation operators on $E$ compatible with $T$ is said to be $\alpha_{T}$-mixing ($\varphi_{T}$-mixing) if $\alpha_{T,m} \rightarrow 0$ ($\varphi_{T,m} \rightarrow 0$) in order as $m \rightarrow \infty$.
\end{defs}

\begin{defs}
\label{riesz space mixing processes}
Let $E$ be a $T$-universally complete Riesz space with weak order unit $e = Te$, where $T$ is a strictly positive conditional expectation operator on $E$. The sequence $(f_{n})_{n \in \mathbb{Z}} \subset E$ is said to be $\alpha_{T}$-mixing ($\varphi_{T}$-mixing) if the sequence of conditional expectation operators $(T_{n})_{n \in \mathbb{Z}}$ with $\mathcal{R}(T_{n}) = \langle \{f_{n}\} \cup \mathcal{R}(T) \rangle$ is $\alpha_{T}$-mixing ($\varphi_{T}$-mixing).
\end{defs}

The existence of the conditional expectation operators $T_{-\infty}^{n}$, $T_{n+m}^{\infty}$ and $T_{n}$ in the preceding definitions follows directly from \cref{ando-douglas}. 
Also, in view of Lemma \ref{mixing coefficients inequality}, it is the case that a $\varphi_{T}$-mixing sequence is necessarily $\alpha_{T}$-mixing. 
As discussed in \cite{gallantwhite}, mixing sequences can be used to describe the time dependent location of a particle suspended in a mixture. In particular, how the position becomes progressively less dependent on its initial position as time progresses.
Definition \ref{riesz space mixing processes} states that the future realizations of a mixing sequence $(f_{n})_{n \in \mathbb{Z}}$ are loosely $T$-conditionally independent of the past realizations, as measured by the relevant mixing coefficient, and that this independence strengthens as the gap between the past, present and future widens.

We now review the theory of mixingales in Riesz spaces. Recall that a sequence $(T_{n})_{n \in \mathbb{Z}}$ of conditional expectation operators on a Riesz space $E$ is said to be a filtration on $E$ if $T_{m}T_{n} = T_{n}T_{m} = T_{m}$ for all $m \leq n$. The following definition, which is extended from \cite{mix}, provides Riesz space analogues of the classical $\mathcal{L}^{p}$-mixingales for $p \in \{1,2, \infty\}$.

\begin{defs}
\label{mixingale}
Let $E$ be a $T$-universally complete Riesz space with weak order unit $e=Te$, where $T$ is a strictly positive conditional expectation operator on $E$. Let $(T_{n})_{n \in \mathbb{Z}}$ be a filtration on $E$ compatible with $T$ and $(f_{n})_{n \in \mathbb{Z}} \subset \mathcal{L}^{p}(T)$, $p \in \{1, 2, \infty\}$. The double sequence $(f_{n},T_{n})_{n \in \mathbb{Z}}$ is said to be a mixingale
in $\mathcal{L}^{p}(T)$ if there exist sequences $(c_{n})_{n \in \mathbb{Z}} \subset \mathcal{L}^{p}(T)_{+}$ and $(\phi_{m})_{m \in \mathbb{N}} \subset \mathcal{R}(T)_{+}$ with $\phi_{m} \rightarrow 0$ in order as $m \rightarrow \infty$, and for all $n \in \mathbb{Z}$ and $m \in \mathbb{N}$, we have
\begin{enumerate}
\item[(i)] $\left\Vert T_{n-m}f_{n} \right\Vert_{T,p} \leq c_{n}\phi_{m}$,
\item[(ii)] $\left\Vert f_{n} - T_{n+m}f_{n} \right\Vert_{T,p} \leq c_{n}\phi_{m+1}$.
\end{enumerate}
\end{defs}

The extension from \cite{mix} represented in the preceding definition relates to the mixingale numbers $(\phi_{m})_{m \in \mathbb{N}}$ being elements of $\mathcal{R}(T)_{+}$ as opposed to $\mathbb{R}_{+}$, as well as to the consideration of the cases $p \in \{2, \infty\}$. Note that, since the $\mathcal{L}^{p}(T)$ spaces are $\mathcal{R}(T)$-modules, the bounds $c_{n}\phi_{m}$ and $c_{n}\phi_{m+1}$ in the preceding definition are in $\mathcal{L}^{p}(T)$, for $p \in \{1, 2, \infty\}$. Also, for $p = \infty$, we have
\begin{enumerate}
\item[(i)] $\left\Vert T_{n-m}f_{n}\right\Vert_{T,\infty} \leq c_{n} \phi_{m} \Rightarrow \left|T_{n-m}f_{n}\right| \leq c_{n} \phi_{m}$,
\item[(ii)] $\left\Vert f_{n} - T_{n-m}f_{n}\right\Vert_{T,\infty} \leq c_{n} \phi_{m+1} \Rightarrow \left|f_{n} - T_{n+m}f_{n}\right| \leq c_{n} \phi_{m+1}$.
\end{enumerate}

To complete this section, we provide an extension of the main result proved in \cite{mix}.

\begin{defs}
\label{T-uniform}
Let $E$ be a Dedekind complete Riesz space with conditional expectation operator $T$ and weak order unit $e = Te$. The net $(f_{\alpha})_{\alpha \in \Lambda}$ in $E$, where $\Lambda$ is some index set, is said to be $T$-uniform if
\begin{align*}
\sup\{TP_{(|f_{\alpha}| - ce)^{+}}|f_{\alpha}| : \alpha \in \Lambda\} \rightarrow 0 \text{ in order as } c \rightarrow \infty.
\end{align*}
\end{defs}

\begin{thm}
\label{mixingale weak law of large numbers}
Let $E$ be a $T$-universally complete Riesz space with weak order unit $e=Te$, where $T$ is a strictly positive conditional expectation operator on $E$, and let $(f_{n}, T_{n})_{n \in \mathbb{Z}}$ be a $T$-uniform mixingale in $\mathcal{L}^{p}(T)$, $p \in \{1,2,\infty\}$, with $(c_{n})_{n \in \mathbb{Z}}$ and $(\phi_{m})_{m \in \mathbb{N}}$ as defined in Definition \ref{mixingale}.
\begin{enumerate}
\item[(i)] If $\displaystyle{\left(\frac{1}{m} \sum_{i=n+1}^{n+m} c_{i} \right)_{m \in \mathbb{N}}}$ is bounded in $E$ uniformly in $n \in \mathbb{Z}$, then
\begin{align*}
T|\overline{f}_{n,m}| = T\left|\frac{1}{m} \sum_{i=n+1}^{n+m} f_{i} \right| \rightarrow 0 \text{ in order as } m \rightarrow \infty, \text{ uniformly in } n \in \mathbb{Z}.
\end{align*}
\item[(ii)] If $c_{n} = T|f_{n}|$ for all $n \in \mathbb{Z}$, then
\begin{align*}
T|\overline{f}_{n,m}| = T\left|\frac{1}{m} \sum_{i=n+1}^{n+m} f_{i} \right| \rightarrow 0 \text{ in order as } m \rightarrow \infty, \text{ uniformly in } n \in \mathbb{Z}.
\end{align*}
\end{enumerate}
\end{thm}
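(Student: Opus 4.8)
The plan is to reduce to the case $p=1$ of part (i) and then carry out McLeish's telescoping argument with order convergence replacing norm convergence (as in \cite{mix}). First, the Lyapunov inequality (Theorem \ref{riesz space lyapunov's inequality}) shows that a mixingale in $\mathcal{L}^p(T)$, $p\in\{2,\infty\}$, is a mixingale in $\mathcal{L}^1(T)$ with the same $(c_n)$ and $(\phi_m)$, and $T$-uniformity (Definition \ref{T-uniform}) does not involve $p$; so it suffices to treat $p=1$. Second, $T$-uniformity of $(f_n)$ gives some $c_0>0$ with $\sup_n TP_{(|f_n|-c_0e)^+}|f_n|\le e$, and splitting $|f_n|$ along $P_{(|f_n|-c_0e)^+}$ yields the a priori bound $\sup_n T|f_n|\le(1+c_0)e=:Me$; in part (ii), where $c_n=T|f_n|$, this forces $\frac{1}{m}\sum_{i=n+1}^{n+m}c_i\le Me$ uniformly in $n$, so (ii) is a special case of (i). We are left with (i) for $p=1$, where $\sup_n T|f_n|\le Me$ still holds — this is needed to make the iterated order-$\limsup$ below well defined.

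Fix $K\in\mathbb{N}$ and put $Y_{i,k}:=T_{i+k}f_i-T_{i+k-1}f_i$. Since $(T_n)$ is a filtration, $\sum_{k=-K}^{K}Y_{i,k}=T_{i+K}f_i-T_{i-K-1}f_i$, so I would decompose $\overline{f}_{n,m}$ as $\frac{1}{m}\sum_{i=n+1}^{n+m}(f_i-T_{i+K}f_i)$ plus $\sum_{k=-K}^{K}\frac{1}{m}\sum_{i=n+1}^{n+m}Y_{i,k}$ plus $\frac{1}{m}\sum_{i=n+1}^{n+m}T_{i-K-1}f_i$. By Definition \ref{mixingale}(ii) with parameter $K$ and Definition \ref{mixingale}(i) with parameter $K+1$, together with the boundedness hypothesis, the $T$-conditional $\mathcal{L}^1$-norms of the two outer terms are each at most $C\phi_{K+1}$, where $C$ bounds $\frac{1}{m}\sum_{i=n+1}^{n+m}c_i$. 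Hence $\sup_n T|\overline{f}_{n,m}|\le 2C\phi_{K+1}+\sum_{k=-K}^{K}\sup_n T\big|\frac{1}{m}\sum_{i=n+1}^{n+m}Y_{i,k}\big|$, and as the $k$-sum is finite everything comes down to showing, for each fixed $k$, that $\sup_n T\big|\frac{1}{m}\sum_{i=n+1}^{n+m}Y_{i,k}\big|\to0$ in order as $m\to\infty$; then letting $K\to\infty$ and using $\phi_{K+1}\to0$ in order finishes the proof.

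The heart of the matter is a Riesz-space Cesàro law for $T$-uniform martingale differences. For fixed $k$, $(Y_{i,k})_i$ is a martingale-difference sequence for the shifted filtration $S_i:=T_{i+k}$: $\mathcal{R}(T_{i+k-1})\subseteq\mathcal{R}(T_{i+k})$ gives $Y_{i,k}\in\mathcal{R}(S_i)$, and the filtration identities give $S_{i-1}Y_{i,k}=0$. One also needs $(Y_{i,k})_i$ to be $T$-uniform; I would obtain this from the $T$-uniformity of $(f_n)$ using that conditional expectations compatible with $T$ preserve $T$-uniformity and that a difference of $T$-uniform sequences is $T$-uniform. Then, fixing $c>0$ and $Q_i:=P_{(|Y_{i,k}|-ce)^+}$ (which lies in $\mathcal{B}(S_i)$, so $Q_iY_{i,k}\in\mathcal{R}(S_i)$), I would split $Y_{i,k}=Y_i'+Y_i''$ with $Y_i':=(I-Q_i)Y_{i,k}-S_{i-1}((I-Q_i)Y_{i,k})$ and $Y_i'':=Q_iY_{i,k}-S_{i-1}(Q_iY_{i,k})$, both martingale differences for $(S_i)$. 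Since $|Y_i'|\le 2ce\in\mathcal{R}(T)$, the $Y_i'$ are pairwise $T$-orthogonal (by the averaging property and $TS_{i-1}=T$), so by Theorem \ref{riesz space norms}, Theorem \ref{riesz space lyapunov's inequality} and $e^2=e$ one gets $T\big|\frac{1}{m}\sum_{i=n+1}^{n+m}Y_i'\big|\le\big\|\frac{1}{m}\sum_{i=n+1}^{n+m}Y_i'\big\|_{T,2}\le\frac{2c}{\sqrt m}e$, uniformly in $n$; and $T|Y_i''|\le 2TQ_i|Y_{i,k}|\le 2\delta_k(c)$, where $\delta_k(c):=\sup_i TP_{(|Y_{i,k}|-ce)^+}|Y_{i,k}|\downarrow 0$ in order as $c\to\infty$. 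Adding, then taking the order-$\limsup$ in $m$, then letting $c\to\infty$, yields the claimed convergence; tracing back, $\limsup_m\sup_n T|\overline{f}_{n,m}|\le 2C\phi_{K+1}$ for all $K$, and the result follows.

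I expect the $T$-uniform martingale-difference Cesàro lemma to be the main obstacle, since it has no counterpart in the excerpt. Three points inside it need care: that $\mathcal{R}(S_i)$ is invariant under the band projections $Q_i$ (implicit in the definition of $\mathcal{B}(S)$, but worth stating explicitly); the transfer of $T$-uniformity from $(f_n)$ to $(Y_{i,k})_i$ through the conditional expectations and the subtraction — the Riesz-space analogue of ``uniform integrability passes to conditional expectations'', most naturally extracted from Jensen's inequality (Theorem \ref{jensen's inequality}) on the band where $|T_{i+k}f_i|$ is large; and the legitimacy of the threefold order-limit ($m\to\infty$, then $c\to\infty$, then $K\to\infty$), which is exactly why the a priori bound $\sup_n T|f_n|\le Me$ is recorded at the outset. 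Everything else — the reductions, the telescoping identity, the outer-term estimates — is routine.
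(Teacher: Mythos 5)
Your proposal is, in substance, the proof the paper itself relies on: the paper does not reprove Theorem \ref{mixingale weak law of large numbers} but defers to \cite{mix}, noting only the Lyapunov reduction from $p\in\{2,\infty\}$ to $p=1$, the uniformity in $n$, and the passage from real-valued to $\mathcal{R}(T)_{+}$-valued mixingale numbers --- and your reconstruction (McLeish telescoping into two mixingale tails plus $2K+1$ martingale-difference arrays, truncation via $T$-uniformity, $T$-orthogonality giving the $m^{-1/2}e$ bound, then the three-fold order limit) is exactly that argument. One step is wrong as written: order convergence to $0$ of the decreasing net $g_{c}:=\sup_{n}TP_{(|f_{n}|-ce)^{+}}|f_{n}|$ means $\inf_{c}g_{c}=0$, not that some $g_{c_{0}}\leq e$; in a universally complete space no such $c_{0}$ need exist, so the a priori bound $\sup_{n}T|f_{n}|\leq(1+c_{0})e$ is unjustified. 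The repair is immediate and changes nothing downstream: for any fixed $c_{0}$ one has $\sup_{n}T|f_{n}|\leq g_{c_{0}}+c_{0}e\in E_{+}$, which is all that is needed both to reduce (ii) to (i) and to make the iterated order-$\limsup$ legitimate. The remaining lemma you flag (that conditional expectation operators compatible with $T$, and differences, preserve $T$-uniformity) is a genuine prerequisite, but it is precisely the auxiliary result established in \cite{mix}, and your sketch of it via Jensen's inequality on the band where the conditional expectation is large is the standard route.
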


Proof of the preceding theorem is largely unchanged from that provided in \cite{mix}. Firstly, the case of $p \in \{2, \infty\}$ follows immediately from that of $p=1$, since mixingales in $\mathcal{L}^{2}(T)$ and $\mathcal{L}^{\infty}(T)$ are mixingales in $\mathcal{L}^{1}(T)$, by Lyapunov's inequality. 
Secondly, the condition that convergence occurs uniformly in $n \in \mathbb{Z}$ is a result of the boundedness being uniform in $n \in \mathbb{Z}$. 
Thirdly, the generalization of the proof from the mixingale numbers being in $\mathbb{R}_{+}$ to them being in $\mathcal{R}(T)_{+}$ follows from $\mathcal{L}^{1}(T)$ being an $\mathcal{R}(T)$-module and $\mathcal{R}(T)$ being a universally complete $f$-algebra.


\newsection{Near-epoch dependence}

We recall the classical definition of near-epoch dependence, which follows primarily from \cite{davidson}.

\begin{defs}
\label{measure theory NED}
Let $(Y_{n})_{n \in \mathbb{Z}}$ be a stochastic process defined on a probability space $(\Omega, \mathcal{F}, \mathbb{P})$. The sequence of integrable random variables $(X_{n})_{n \in \mathbb{Z}}$ is said to be near-epoch dependent in $L^{p}$-norm, for $1 \leq p \leq \infty$, on $(Y_{n})_{n \in \mathbb{Z}}$ if there exist sequences $(d_{n})_{n \in \mathbb{Z}}, (\xi_{m})_{m \in \mathbb{N}} \subset \mathbb{R}_{+}$ with $\xi_{m} \rightarrow 0$ as $m \rightarrow \infty$, and for all $n \in \mathbb{Z}$ and $m \in \mathbb{N}$, we have
\begin{align*}
\Vert X_{n} - \mathbb{E}(X_{n} \, \vert \, \mathcal{F}_{n-m}^{n+m})\Vert_{p} \leq d_{n} \xi_{m},
\end{align*}
where $\mathcal{F}_{n-m}^{n+m} = \sigma(Y_{i} : i = n-m, \ldots, n+m)$.
\end{defs}

From a conceptual viewpoint, near-epoch dependence should not be interpreted as a property of the random variables $(Y_{n})_{n \in \mathbb{Z}}$ and $(X_{n})_{n \in \mathbb{Z}}$, but rather as a property of the mapping between them. In particular, near-epoch dependence provides a suitably generalized means of studying the characteristics of stochastic systems in which a sequence of dependent random variables $(X_{n})_{n \in \mathbb{Z}}$ depends primarily on the near incidents of a sequence of explanatory random variables $(Y_{n})_{n \in \mathbb{Z}}$. As such, a natural line of study relates to the determination of the properties induced on the dependent process by the explanatory process, an example of which is presented in Theorem \ref{NED on mixing is mixingale}.

In preparation for the Riesz space characterization of near-epoch dependence, note that the family of sub-$\sigma$-algebras $(\mathcal{F}_{m}^{n})$ generated by $(Y_{m}, \ldots, Y_{n})$ in the preceding definition can be characterized as satisfying
\begin{align*}
\mathcal{F}_{m+1}^{n} \subset \mathcal{F}_{m}^{n} \subset \mathcal{F}_{m}^{n+1},
\end{align*}
for all $m, n \in \mathbb{Z}$ such that $m < n$. Using this characterization directly, it is possible to formulate an abstract definition of near-epoch dependence without any reference to the underlying stochastic process $(Y_{n})_{n \in \mathbb{Z}}$.

For brevity, it is assumed throughout the remainder of this section, unless otherwise stated, that $E$ is a $T$-universally complete Riesz space with weak order unit $e = Te$, where $T$ is a strictly positive conditional expectation operator on $E$. The following definition provides the Riesz space analogue of near-epoch dependence in $L^{p}$-norm for $p \in \{1, 2, \infty\}$. 

\begin{defs}
\label{NED}
Let $(T_{i}^{j})$ be a family of conditional expectation operators on $E$ compatible with $T$ such that $\mathcal{R}(T_{i+1}^{j}) \subset \mathcal{R}(T_{i}^{j}) \subset \mathcal{R}(T_{i}^{j+1})$ for all $-\infty < i < j < \infty$. The sequence $(f_{n})_{n \in \mathbb{Z}} \subset \mathcal{L}^{p}(T)$, $p \in \{1, 2, \infty\}$, is said to be near-epoch dependent
in $\mathcal{L}^{p}(T)$ on $(T_{i}^{j})$ if there exist sequences $(d_{n})_{n \in \mathbb{Z}} \subset \mathcal{L}^{p}(T)_{+}$ and $(\xi_{m})_{m \in \mathbb{N}} \subset \mathcal{R}(T)_{+}$ with $\xi_{m} \rightarrow 0$ in order as $m \rightarrow \infty$, and for all $n \in \mathbb{Z}$ and $m \in \mathbb{N}$, we have
\begin{align*}
\Vert f_{n} - T_{n-m}^{n+m}f_{n} \Vert_{T,p} \leq d_{n} \xi_{m}.
\end{align*}
\end{defs}

As in the case of mixingales in $\mathcal{L}^{\infty}(T)$, we have, for the near-epoch dependence property
\begin{align*}
\Vert f_{n} - T_{n-m}^{n+m}f_{n} \Vert_{T, \infty} \leq d_{n} \xi_{m} \Rightarrow |f_{n} - T_{n-m}^{n+m}f_{n}| \leq d_{n} \xi_{m}.
\end{align*}

Again as in the case of mixingales, near-epoch dependence in $\mathcal{L}^{\infty}(T)$ implies near-epoch dependence in $\mathcal{L}^{2}(T)$, and near-epoch dependence in $\mathcal{L}^{2}(T)$ implies near-epoch dependence in $\mathcal{L}^{1}(T)$.

We will now prove several elementary theorems related to sums, products and shifts of near-epoch dependent sequences. It will be assumed throughout the following that $(T_{i}^{j})$ is a family of conditional expectation operators on $E$ compatible with $T$ such that $\mathcal{R}(T_{i+1}^{j}) \subset \mathcal{R}(T_{i}^{j}) \subset \mathcal{R}(T_{i}^{j+1})$ for all $-\infty < i < j < \infty$.

\begin{thm}
\label{NED addition}
Let $(f_{n})_{n \in \mathbb{Z}}$ and $(g_{n})_{n \in \mathbb{Z}}$ be near-epoch dependent in $\mathcal{L}^{p}(T)$, $p \in \{1,2,\infty\}$, on $(T_{i}^{j})$. Then $(f_{n}+g_{n})_{n \in \mathbb{Z}}$ is near-epoch dependent in $\mathcal{L}^{p}(T)$ on $(T_{i}^{j})$.
\end{thm}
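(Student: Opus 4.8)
The plan is to reduce everything to the linearity of the operators $T_{n-m}^{n+m}$ together with the triangle inequality for the $\mathcal{R}(T)_{+}$-valued norm $\Vert\cdot\Vert_{T,p}$. Let $(d_{n}^{f})$, $(\xi_{m}^{f})$ and $(d_{n}^{g})$, $(\xi_{m}^{g})$ be the sequences witnessing near-epoch dependence of $(f_{n})$ and $(g_{n})$ respectively, as in Definition \ref{NED}. The first step is to note that, since each $T_{n-m}^{n+m}$ is linear,
\[
f_{n} + g_{n} - T_{n-m}^{n+m}(f_{n}+g_{n}) = \bigl(f_{n} - T_{n-m}^{n+m}f_{n}\bigr) + \bigl(g_{n} - T_{n-m}^{n+m}g_{n}\bigr),
\]
whence, applying the triangle inequality for $\Vert\cdot\Vert_{T,p}$ (which holds for $p\in\{1,\infty\}$ by \cite{mixing} and for $p=2$ by Theorem \ref{riesz space norms}),
\[
\Vert f_{n}+g_{n} - T_{n-m}^{n+m}(f_{n}+g_{n})\Vert_{T,p} \leq d_{n}^{f}\xi_{m}^{f} + d_{n}^{g}\xi_{m}^{g}.
\]

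The second step is to absorb the right-hand side into a single product. Set $d_{n} := d_{n}^{f} + d_{n}^{g} \in \mathcal{L}^{p}(T)_{+}$ and $\xi_{m} := \xi_{m}^{f} + \xi_{m}^{g} \in \mathcal{R}(T)_{+}$. Since $\mathcal{R}(T)$ is an $f$-algebra, products of positive elements are positive, so expanding $(d_{n}^{f}+d_{n}^{g})(\xi_{m}^{f}+\xi_{m}^{g})$ and discarding the two nonnegative cross terms yields $d_{n}^{f}\xi_{m}^{f} + d_{n}^{g}\xi_{m}^{g} \leq d_{n}\xi_{m}$. Moreover $\xi_{m} \to 0$ in order as $m\to\infty$ because each summand does, and $d_{n}\xi_{m} \in \mathcal{L}^{p}(T)_{+}$ since $\mathcal{L}^{p}(T)$ is an $\mathcal{R}(T)$-module (as in the remark following Definition \ref{mixingale}). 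This verifies Definition \ref{NED} for $(f_{n}+g_{n})$ with the witnessing sequences $(d_{n})$ and $(\xi_{m})$.

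There is no genuine obstacle here; the argument is essentially the $\mathcal{R}(T)$-valued version of the classical one-line proof. The only points deserving a word of care are invoking the triangle inequality with the correct reference for each $p\in\{1,2,\infty\}$, checking that the bounding element $d_{n}\xi_{m}$ actually lies in $\mathcal{L}^{p}(T)$, and observing that all estimates are uniform in $n\in\mathbb{Z}$, as the definition demands.
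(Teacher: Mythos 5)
Your proof is correct and follows essentially the same route as the paper: linearity of $T_{n-m}^{n+m}$, the triangle inequality for $\Vert\cdot\Vert_{T,p}$, and then absorbing the two bounds into a single product $d_{n}\xi_{m}$. The only (immaterial) difference is that you take $d_{n}=d_{n}^{f}+d_{n}^{g}$ where the paper takes $d_{n}=d_{n}^{f}\vee d_{n}^{g}$; both lie in $\mathcal{L}^{p}(T)_{+}$ and both dominate $d_{n}^{f}\xi_{m}^{f}+d_{n}^{g}\xi_{m}^{g}$ when paired with $\xi_{m}=\xi_{m}^{f}+\xi_{m}^{g}$.
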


\begin{proof}
Since $(f_{n})_{n \in \mathbb{Z}}$ and $(g_{n})_{n \in \mathbb{Z}}$ are near-epoch dependent in $\mathcal{L}^{p}(T)$ on $(T_{i}^{j})$, there are sequences $(d_{n}^{f})_{n \in \mathbb{Z}}, (d_{n}^{g})_{n \in \mathbb{Z}} \subset \mathcal{L}^{p}(T)_{+}$ and $(\xi_{m}^{f})_{m \in \mathbb{N}}, (\xi_{m}^{g})_{m \in \mathbb{N}} \subset \mathcal{R}(T)_{+}$ with $\xi_{m}^{f} \rightarrow 0$ and $\xi_{m}^{g} \rightarrow 0$ in order as $m \rightarrow \infty$, and for all $n \in \mathbb{Z}$ and $m \in \mathbb{N}$,
\begin{gather*}
\Vert f_{n} - T_{n-m}^{n+m}f_{n} \Vert_{T,p} \leq d_{n}^{f} \xi_{m}^{f}, \\
\Vert g_{n} - T_{n-m}^{n+m}g_{n} \Vert_{T,p} \leq d_{n}^{g} \xi_{m}^{g}.
\end{gather*}
By the triangle inequality for $T$-conditional norms,
\begin{align*}
\Vert (f_{n}+g_{n}) - T_{n-m}^{n+m}(f_{n}+g_{n}) \Vert_{T,p} &= \Vert f_{n} - T_{n-m}^{n+m}f_{n} + g_{n} - T_{n-m}^{n+m}g_{n}\Vert_{T,p} \\
&\leq \Vert f_{n} - T_{n-m}^{n+m}f_{n} \Vert_{T,p} + \Vert g_{n} - T_{n-m}^{n+m}g_{n} \Vert_{T,p} \\
&\leq d_{n}^{f} \xi_{m}^{f} + d_{n}^{g} \xi_{m}^{g} \\
&\leq (d_{n}^{f} \vee d_{n}^{g})(\xi_{m}^{f} + \xi_{m}^{g}) \\
&= d_{n} \xi_{m},
\end{align*}
where $d_{n} = d_{n}^{f} \vee d_{n}^{g} \in \mathcal{L}^{p}(T)_{+}$ for all $n \in \mathbb{Z}$, and $\xi_{m} = \xi_{m}^{f} + \xi_{m}^{g} \rightarrow 0$ in order as $m \rightarrow \infty$, giving that $(f_{n}+g_{n})_{n \in \mathbb{Z}}$ is near-epoch dependent in $\mathcal{L}^{p}(T)$ on $(T_{i}^{j})$. \qed
\end{proof}

\begin{cor}
\label{NED addition corollary}
Let $(f_{n})_{n \in \mathbb{Z}}$ and $(g_{n})_{n \in \mathbb{Z}}$ be near-epoch dependent in $\mathcal{L}^{p}(T)$ and $\mathcal{L}^{q}(T)$, $p,q \in \{1,2,\infty\}$, respectively, on $(T_{i}^{j})$. Then $(f_{n}+g_{n})_{n \in \mathbb{Z}}$ is near-epoch dependent in $\mathcal{L}^{r}(T)$ on $(T_{\, i}^{j})$, for $r = \min\left\{p,q\right\}$.
\end{cor}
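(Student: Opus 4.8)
The plan is to reduce the statement to the equal-exponent case already handled in Theorem \ref{NED addition}, using the fact that near-epoch dependence is monotone in the exponent. Concretely, I would first record the following observation: if $(h_n)_{n\in\mathbb{Z}}$ is near-epoch dependent in $\mathcal{L}^p(T)$ on $(T_i^j)$ and $r\in\{1,2,\infty\}$ with $r\le p$, then $(h_n)_{n\in\mathbb{Z}}$ is near-epoch dependent in $\mathcal{L}^r(T)$ on $(T_i^j)$ with the very same coefficient sequences. Indeed, since $\mathcal{L}^\infty(T)\subset\mathcal{L}^2(T)\subset\mathcal{L}^1(T)$, we have $h_n\in\mathcal{L}^p(T)\subset\mathcal{L}^r(T)$, the near-epoch coefficients $(d_n)$ lie in $\mathcal{L}^p(T)_+\subset\mathcal{L}^r(T)_+$, and the numbers $(\xi_m)$ already lie in $\mathcal{R}(T)_+$. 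Lyapunov's inequality (Theorem \ref{riesz space lyapunov's inequality}) gives $\Vert u\Vert_{T,r}\le\Vert u\Vert_{T,p}$ for $u\in\mathcal{L}^p(T)$ whenever $r\le p$ with $r,p\in\{1,2,\infty\}$, so applying it to $u=h_n-T_{n-m}^{n+m}h_n$ yields
\[
\Vert h_n-T_{n-m}^{n+m}h_n\Vert_{T,r}\le\Vert h_n-T_{n-m}^{n+m}h_n\Vert_{T,p}\le d_n\xi_m,
\]
which is exactly the defining inequality for near-epoch dependence in $\mathcal{L}^r(T)$.

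With this observation the corollary is immediate. I would set $r=\min\{p,q\}$, so that $r\le p$ and $r\le q$ with $r\in\{1,2,\infty\}$. By the monotonicity just established, $(f_n)_{n\in\mathbb{Z}}$ is near-epoch dependent in $\mathcal{L}^r(T)$ on $(T_i^j)$, and likewise $(g_n)_{n\in\mathbb{Z}}$ is near-epoch dependent in $\mathcal{L}^r(T)$ on $(T_i^j)$. Theorem \ref{NED addition}, applied with the common exponent $r$, then shows that $(f_n+g_n)_{n\in\mathbb{Z}}$ is near-epoch dependent in $\mathcal{L}^r(T)$ on $(T_i^j)$, as required.

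I do not anticipate a genuine obstacle here; the only point deserving care is that the Lyapunov comparison of $T$-conditional norms is available precisely for the exponents $1,2,\infty$ that appear in the definition of near-epoch dependence, so the reduction never leaves the established framework. In fact the monotonicity step is already recorded in the remark immediately preceding Theorem \ref{NED addition} (``near-epoch dependence in $\mathcal{L}^\infty(T)$ implies near-epoch dependence in $\mathcal{L}^2(T)$, and near-epoch dependence in $\mathcal{L}^2(T)$ implies near-epoch dependence in $\mathcal{L}^1(T)$''), so the corollary can be presented as a one-line consequence of that remark together with Theorem \ref{NED addition}.
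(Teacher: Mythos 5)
Your proposal is correct and follows exactly the route the paper intends: the paper states the monotonicity of near-epoch dependence in the exponent in the remark immediately before Theorem \ref{NED addition} and leaves the corollary as an unproved consequence of that remark combined with Theorem \ref{NED addition}, which is precisely your argument. Your explicit justification of the monotonicity step via Lyapunov's inequality (Theorem \ref{riesz space lyapunov's inequality}) is a sound filling-in of a detail the paper only asserts.
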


\begin{thm}
\label{NED product 1 infinity and infinity}
Let $(f_{n})_{n \in \mathbb{Z}}$ and $(g_{n})_{n \in \mathbb{Z}}$ be near-epoch dependent in $\mathcal{L}^{p}(T)$, $p \in \{1,\infty\}$, and $\mathcal{L}^{\infty}(T)$, respectively, on $(T_{i}^{j})$. Then $(f_{n}g_{n})_{n \in \mathbb{Z}}$ is near-epoch dependent in $\mathcal{L}^{p}(T)$ on $(T_{i}^{j})$.
\end{thm}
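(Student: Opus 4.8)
The plan is to set $S := T_{n-m}^{n+m}$, which is a conditional expectation operator on $E = \mathcal{L}^1(T)$ compatible with $T$, and to compare $f_n g_n$ with the product $(Sf_n)(Sg_n)$ of the separate truncations via
\begin{align*}
f_ng_n - S(f_ng_n) = \big(f_ng_n - (Sf_n)(Sg_n)\big) + \big((Sf_n)(Sg_n) - S(f_ng_n)\big),
\end{align*}
splitting the first bracket further as $(f_n - Sf_n)g_n + (Sf_n)(g_n - Sg_n)$. Before estimating I would record that every product occurring here lies in $\mathcal{L}^p(T) \subseteq E$: indeed $g_n \in \mathcal{L}^\infty(T)$ and $Sg_n \in \mathcal{L}^\infty(T)$ (the latter by Theorem \ref{jensen's inequality}), and $\mathcal{L}^p(T)\cdot\mathcal{L}^\infty(T) \subseteq \mathcal{L}^p(T)$ for $p \in \{1,\infty\}$, so $f_ng_n$, $(Sf_n)(Sg_n)$, $(f_n - Sf_n)g_n$ and $(Sf_n)(g_n - Sg_n)$ all belong to $\mathcal{L}^p(T)$; in particular $(f_ng_n)_{n\in\mathbb{Z}} \subset \mathcal{L}^p(T)$, as the definition of near-epoch dependence requires.

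For the second bracket I would use that $Sf_n \in \mathcal{R}(S)$, $(Sf_n)(Sg_n) \in E$, and $S$ is an averaging operator, so that $S\big((Sf_n)(Sg_n)\big) = (Sf_n)\,S(Sg_n) = (Sf_n)(Sg_n)$. Hence the second bracket equals $S\big((Sf_n)(Sg_n) - f_ng_n\big)$, and Theorem \ref{jensen's inequality} applied to $S$ yields $\|(Sf_n)(Sg_n) - S(f_ng_n)\|_{T,p} \leq \|f_ng_n - (Sf_n)(Sg_n)\|_{T,p}$; therefore $\|f_ng_n - S(f_ng_n)\|_{T,p} \leq 2\,\|f_ng_n - (Sf_n)(Sg_n)\|_{T,p}$.

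It then remains to bound the two summands of $f_ng_n - (Sf_n)(Sg_n)$, each being a product of an $\mathcal{L}^p(T)$ factor by an $\mathcal{L}^\infty(T)$ factor:
\begin{align*}
\|(f_n - Sf_n)g_n\|_{T,p} &\leq \|f_n - Sf_n\|_{T,p}\,\|g_n\|_{T,\infty}, \\
\|(Sf_n)(g_n - Sg_n)\|_{T,p} &\leq \|Sf_n\|_{T,p}\,\|g_n - Sg_n\|_{T,\infty},
\end{align*}
where for $p = 1$ these are instances of Theorem \ref{Holder's inequality} and for $p = \infty$ they follow immediately from the submultiplicativity of $\|\cdot\|_{T,\infty}$ (itself immediate from its definition and $\mathcal{R}(T)$ being an $f$-algebra). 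Invoking Theorem \ref{jensen's inequality} once more for $\|Sf_n\|_{T,p} \leq \|f_n\|_{T,p}$, together with the two near-epoch dependence hypotheses $\|f_n - Sf_n\|_{T,p} \leq d_n^f\xi_m^f$ and $\|g_n - Sg_n\|_{T,\infty} \leq d_n^g\xi_m^g$, I would reach
\begin{align*}
\|f_ng_n - S(f_ng_n)\|_{T,p} \leq 2\big(\|g_n\|_{T,\infty}\,d_n^f\,\xi_m^f + \|f_n\|_{T,p}\,d_n^g\,\xi_m^g\big) \leq d_n\xi_m,
\end{align*}
with $d_n := 2\big(\|g_n\|_{T,\infty}d_n^f \vee \|f_n\|_{T,p}d_n^g\big) \in \mathcal{L}^p(T)_+$ (the $\mathcal{L}^p(T)$ spaces being $\mathcal{R}(T)$-modules) and $\xi_m := \xi_m^f + \xi_m^g \to 0$ in order, which is exactly the near-epoch dependence estimate for $(f_ng_n)$ on $(T_i^j)$.

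The step requiring the most care is the identity $S\big((Sf_n)(Sg_n)\big) = (Sf_n)(Sg_n)$, where one must be sure that the product does not leave $E$, the space on which the averaging property of $S$ is available. This is precisely the point at which near-epoch dependence of $(g_n)$ in $\mathcal{L}^\infty(T)$ — rather than in a possibly larger $\mathcal{L}^p(T)$ — is used: it keeps $g_n$, $Sg_n$ and all the relevant products inside $\mathcal{L}^p(T) \subseteq E$. Everything else amounts to routine bookkeeping with Hölder's and Jensen's inequalities and the module and lattice structure of $\mathcal{R}(T)_+$.
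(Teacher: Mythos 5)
Your proof is correct; the difference from the paper lies in how the product deviation is decomposed. Writing $S = T_{n-m}^{n+m}$, the paper works from the exact three-term identity
\begin{align*}
f_{n}g_{n} - S(f_{n}g_{n}) = f_{n}(g_{n} - Sg_{n}) + (f_{n} - Sf_{n})Sg_{n} - S\big[(f_{n} - Sf_{n})(g_{n} - Sg_{n})\big],
\end{align*}
estimates each term separately, and handles the $\mathcal{L}^{\infty}(T)$ factors by passing to $\mathcal{R}(T)_{+}$ majorants $r_{n}^{g} \geq d_{n}^{g}$ and $h_{n} \geq |g_{n}|$; the cross term forces the mixing sequence $\xi_{m} = \xi_{m}^{f} + \xi_{m}^{g} + \xi_{m}^{f}\xi_{m}^{g}$. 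You instead compare $f_{n}g_{n}$ with $(Sf_{n})(Sg_{n})$, observe via the averaging property that $(Sf_{n})(Sg_{n}) - S(f_{n}g_{n}) = S\big((Sf_{n})(Sg_{n}) - f_{n}g_{n}\big)$, and absorb this remainder with Jensen's inequality at the cost of a factor $2$ --- the same doubling device as Lemma \ref{NED inequality lemma}, transplanted to products. This buys a cleaner mixing sequence $\xi_{m} = \xi_{m}^{f} + \xi_{m}^{g}$ with no cross term and one fewer estimate, at the negligible cost of the factor $2$ in $d_{n}$. Your care about where the products live (all in $\mathcal{L}^{p}(T) \subseteq E$, so that the averaging property and Theorem \ref{jensen's inequality} apply) secures exactly the point the paper also has to secure, and your use of H\"older for the pair $(1,\infty)$ together with the submultiplicativity of $\Vert \cdot \Vert_{T,\infty}$ is an equivalent packaging of the paper's majorant argument.
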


\begin{proof}
Before proceeding, note that $(f_{n}g_{n})_{n \in \mathbb{Z}} \subset \mathcal{L}^{p}(T)$. Since $(f_{n})_{n \in \mathbb{Z}}$ and $(g_{n})_{n \in \mathbb{Z}}$ are near-epoch dependent in $\mathcal{L}^{p}(T)$ and $\mathcal{L}^{\infty}(T)$, respectively, on $(T_{i}^{j})$, there are sequences $(d_{n}^{f})_{n \in \mathbb{Z}} \subset \mathcal{L}^{p}(T)_{+}$, $(d_{n}^{g})_{n \in \mathbb{Z}} \subset \mathcal{L}^{\infty}(T)_{+}$ and $(\xi_{m}^{f})_{m \in \mathbb{N}}, (\xi_{m}^{g})_{m \in \mathbb{N}} \subset \mathcal{R}(T)_{+}$ with $\xi_{m}^{f} \rightarrow 0$ and $\xi_{m}^{g} \rightarrow 0$ in order as $m \rightarrow \infty$, and for all $n \in \mathbb{Z}$ and $m \in \mathbb{N}$,
\begin{gather*}
\Vert f_{n} - T_{n-m}^{n+m}f_{n} \Vert_{T,p} \leq d_{n}^{f} \xi_{m}^{f}, \\
|g_{n} - T_{n-m}^{n+m}g_{n}| \leq d_{n}^{g} \xi_{m}^{g}.
\end{gather*}
By the triangle inequality for $T$-conditional norms,
\begin{align*}
\Vert f_{n}g_{n} - T_{n-m}^{n+m}f_{n}g_{n} \Vert_{T,p} = &\Vert (f_{n}g_{n} - f_{n}T_{n-m}^{n+m}g_{n}) + (f_{n}T_{n-m}^{n+m}g_{n} - (T_{n-m}^{n+m}f_{n})(T_{n-m}^{n+m}g_{n})) \\
&- T_{n-m}^{n+m}[(f_{n} - T_{n-m}^{n+m}f_{n})(g_{n} - T_{n-m}^{n+m}g_{n})] \Vert_{T,p} \\
\leq \, &\Vert f_{n}(g_{n} - T_{n-m}^{n+m}g_{n}) \Vert_{T,p} \\
&+ \Vert (f_{n} - T_{n-m}^{n+m}f_{n})T_{n-m}^{n+m}g_{n} \Vert_{T,p} \\
&+ \Vert T_{n-m}^{n+m}[(f_{n} - T_{n-m}^{n+m}f_{n})(g_{n} - T_{n-m}^{n+m}g_{n})] \Vert_{T,p}.
\end{align*}
For the first term, since $d_{n}^{g} \in \mathcal{L}^{\infty}(T)_{+}$, there exists $r_{n}^{g} \in \mathcal{R}(T)_{+}$ such that $d_{n}^{g} \leq r_{n}^{g}$. By the monotonicity and homogeneity of $T$-conditional norms,
\begin{align*}
\Vert f_{n}(g_{n} - T_{n-m}^{n+m}g_{n}) \Vert_{T,p} &\leq \Vert |f_{n}| \cdot d_{n}^{g} \xi_{m}^{g} \Vert_{T,p} \\
&\leq \xi_{m}^{g} \Vert f_{n} r_{n}^{g} \Vert_{T,p} \\
&= \xi_{m}^{g} r_{n}^{g} \Vert f_{n} \Vert_{T,p}.
\end{align*}
For the second term, since $g_{n} \in \mathcal{L}^{\infty}(T)$, there exists $h_{n} \in \mathcal{R}(T)_{+}$ such that $|g_{n}| \leq h_{n}$. By the positivity of $T_{n-m}^{n+m}$ and its compatibility with $T$, and as $h_{n} \in \mathcal{R}(T)$,
\begin{align*}
|T_{n-m}^{n+m} g_{n}| \leq T_{n-m}^{n+m} |g_{n}| \leq T_{n-m}^{n+m} h_{n} = T_{n-m}^{n+m} T h_{n} = Th_{n} = h_{n}.
\end{align*}
Again by the monotonicity and homogeneity of $T$-conditional norms,
\begin{align*}
\Vert (f_{n} - T_{n-m}^{n+m}f_{n})T_{n-m}^{n+m}g_{n} \Vert_{T,p} &\leq \Vert |f_{n} - T_{n-m}^{n+m}f_{n}| h_{n} \Vert_{T,p} \\
&= h_{n} \Vert f_{n} - T_{n-m}^{n+m}f_{n} \Vert_{T,p} \\
&\leq h_{n} d_{n}^{f} \xi_{m}^{f}.
\end{align*}
For the third term, it follows from Jensen's inequality that
\begin{align*}
\Vert T_{n-m}^{n+m}[(f_{n} - T_{n-m}^{n+m}f_{n})(g_{n} - T_{n-m}^{n+m}g_{n})] \Vert_{T,p} &\leq \Vert (f_{n} - T_{n-m}^{n+m}f_{n})(g_{n} - T_{n-m}^{n+m}g_{n}) \Vert_{T,p} \\
&\leq \Vert |f_{n} - T_{n-m}^{n+m}f_{n}| d_{n}^{g} \xi_{m}^{g} \Vert_{T,p} \\
&\leq \xi_{m}^{g} \Vert |f_{n} - T_{n-m}^{n+m}f_{n}| r_{n}^{g} \Vert_{T,p} \\
&= \xi_{m}^{g} r_{n}^{g} \Vert f_{n} - T_{n-m}^{n+m} f_{n} \Vert_{T,p} \\
&\leq \xi_{m}^{g} r_{n}^{g} d_{n}^{f} \xi_{m}^{f}.
\end{align*}
Putting all of the above inequalities together, we have, for all $n \in \mathbb{Z}$ and $m \in \mathbb{N}$,
\begin{align*}
\Vert f_{n}g_{n} - T_{n-m}^{n+m}f_{n}g_{n} \Vert_{T,p} &\leq r_{n}^{g} \Vert f_{n} \Vert_{T,p} \xi_{m}^{g} + d_{n}^{f} h_{n} \xi_{m}^{f} + d_{n}^{f} r_{n}^{g} \xi_{m}^{f} \xi_{m}^{g} \\
&\leq (r_{n}^{g} \Vert f_{n} \Vert_{T,p} \vee d_{n}^{f} h_{n} \vee d_{n}^{f} r_{n}^{g})(\xi_{m}^{f} + \xi_{m}^{g} + \xi_{m}^{f} \xi_{m}^{g}) \\
&= d_{n} \xi_{m},
\end{align*}
where $d_{n} = r_{n}^{g} \left\Vert f_{n} \right\Vert_{T,p} \vee d_{n}^{f} h_{n} \vee d_{n}^{f} r_{n}^{g} \in \mathcal{L}^{p}(T)_{+}$ and $\xi_{m} = \xi_{m}^{f} + \xi_{m}^{g} + \xi_{m}^{f} \xi_{m}^{g} \rightarrow 0$ in order as $m \rightarrow \infty$, giving that $(f_{n}g_{n})_{n \in \mathbb{Z}}$ is near-epoch dependent in $\mathcal{L}^{p}(T)$ on $(T_{i}^{j})$. \qed
\end{proof}

\begin{thm}
\label{NED product 2 and 2}
Let $(f_{n})_{n \in \mathbb{Z}}$ and $(g_{n})_{n \in \mathbb{Z}}$ be near-epoch dependent in $\mathcal{L}^{2}(T)$ on $(T_{i}^{j})$. Then $(f_{n}g_{n})_{n \in \mathbb{Z}}$ is near-epoch dependent in $\mathcal{L}^{1}(T)$ on $(T_{i}^{j})$.
\end{thm}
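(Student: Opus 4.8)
The plan is to follow the template of the proof of Theorem~\ref{NED product 1 infinity and infinity}, but to replace the $\mathcal{L}^{\infty}(T)$--monotonicity estimates used there by the $(2,2)$ case of H\"older's inequality, Theorem~\ref{Holder's inequality}. First note that $f_{n}, g_{n} \in \mathcal{L}^{2}(T)$ forces $f_{n}g_{n} \in \mathcal{L}^{1}(T)$, so the candidate process lies in the right space. Writing $S = T_{n-m}^{n+m}$ for brevity, I would start from the identity
\begin{align*}
f_{n}g_{n} - S(f_{n}g_{n}) = f_{n}(g_{n} - Sg_{n}) + (f_{n} - Sf_{n})(Sg_{n}) - S\big[(f_{n} - Sf_{n})(g_{n} - Sg_{n})\big],
\end{align*}
which is verified exactly as in Theorem~\ref{NED product 1 infinity and infinity} using that $S$ is an averaging operator with $ST = TS = T$, so that $S$ applied to the expanded product collapses to $S(f_{n}g_{n}) - Sf_{n}\cdot Sg_{n}$.

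Next I would estimate each of the three terms in $\Vert\cdot\Vert_{T,1}$. For the first term, H\"older's inequality gives $\Vert f_{n}(g_{n} - Sg_{n})\Vert_{T,1} \le \Vert f_{n}\Vert_{T,2}\,\Vert g_{n} - Sg_{n}\Vert_{T,2} \le \Vert f_{n}\Vert_{T,2}\, d_{n}^{g}\xi_{m}^{g}$, where $(d_{n}^{g}), (\xi_{m}^{g})$ are the NED constants of $(g_{n})$. For the second term, H\"older together with Jensen's inequality (Theorem~\ref{jensen's inequality}, applied with the conditional expectation operator $S$, which is compatible with $T$, and $p=2$) gives $\Vert (f_{n} - Sf_{n})Sg_{n}\Vert_{T,1} \le \Vert f_{n} - Sf_{n}\Vert_{T,2}\,\Vert Sg_{n}\Vert_{T,2} \le d_{n}^{f}\xi_{m}^{f}\,\Vert g_{n}\Vert_{T,2}$. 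For the third term, Jensen's inequality with $p=1$ followed by H\"older gives $\Vert S[(f_{n} - Sf_{n})(g_{n} - Sg_{n})]\Vert_{T,1} \le \Vert f_{n} - Sf_{n}\Vert_{T,2}\,\Vert g_{n} - Sg_{n}\Vert_{T,2} \le d_{n}^{f}d_{n}^{g}\xi_{m}^{f}\xi_{m}^{g}$.

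Summing the three estimates, I would take
\begin{align*}
d_{n} = \Vert f_{n}\Vert_{T,2}\, d_{n}^{g} \vee d_{n}^{f}\,\Vert g_{n}\Vert_{T,2} \vee d_{n}^{f}d_{n}^{g}, \qquad \xi_{m} = \xi_{m}^{f} + \xi_{m}^{g} + \xi_{m}^{f}\xi_{m}^{g},
\end{align*}
so that $\Vert f_{n}g_{n} - S(f_{n}g_{n})\Vert_{T,1} \le d_{n}\xi_{m}$ with $\xi_{m} \to 0$ in order. The one point that needs care, and the only mild obstacle, is membership of the new constants: $d_{n}^{g}$ now lies only in $\mathcal{L}^{2}(T)_{+}$ and is \emph{not} dominated by an element of $\mathcal{R}(T)$, unlike the $\mathcal{L}^{\infty}(T)$ case. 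However $\Vert f_{n}\Vert_{T,2} \in \mathcal{R}(T)_{+}$ by Theorem~\ref{riesz space norms}, so $\Vert f_{n}\Vert_{T,2}\, d_{n}^{g} \in \mathcal{L}^{2}(T)_{+} \subset \mathcal{L}^{1}(T)_{+}$ since $\mathcal{L}^{2}(T)$ is an $\mathcal{R}(T)$-module (Theorem~\ref{Lp(T) are R(T)-modules}); likewise $d_{n}^{f}\,\Vert g_{n}\Vert_{T,2} \in \mathcal{L}^{2}(T)_{+}$, and $d_{n}^{f}d_{n}^{g} \in \mathcal{L}^{1}(T)_{+}$ as a product of two $\mathcal{L}^{2}(T)$ elements. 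Hence $d_{n} \in \mathcal{L}^{1}(T)_{+}$ and $(f_{n}g_{n})_{n \in \mathbb{Z}}$ is near-epoch dependent in $\mathcal{L}^{1}(T)$ on $(T_{i}^{j})$; this loss of an $\mathcal{R}(T)$-bound on the NED constants, absorbed by landing in $\mathcal{L}^{1}(T)$, is exactly why the conclusion is stated for $\mathcal{L}^{1}(T)$ rather than a smaller space.
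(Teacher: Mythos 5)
Your proposal is correct and follows essentially the same route as the paper's own proof: the identical three-term decomposition, the $(2,2)$ H\"older and Jensen estimates on each term, and the same choice of $d_{n} = d_{n}^{g}\Vert f_{n}\Vert_{T,2} \vee d_{n}^{f}\Vert g_{n}\Vert_{T,2} \vee d_{n}^{f}d_{n}^{g}$ and $\xi_{m} = \xi_{m}^{f} + \xi_{m}^{g} + \xi_{m}^{f}\xi_{m}^{g}$. Your closing remark on why the constants only land in $\mathcal{L}^{1}(T)_{+}$ is a useful elaboration the paper leaves implicit, but the argument is the same.
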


\begin{proof}
Since $(f_{n})_{n \in \mathbb{Z}}$ and $(g_{n})_{n \in \mathbb{Z}}$ are near-epoch dependent in $\mathcal{L}^{2}(T)$ on $(T_{i}^{j})$, there are sequences $(d_{n}^{f})_{n \in \mathbb{Z}}, (d_{n}^{g})_{n \in \mathbb{Z}} \subset \mathcal{L}^{2}(T)_{+}$ and $(\xi_{m}^{f})_{m \in \mathbb{Z}}, (\xi_{m}^{g})_{m \in \mathbb{Z}} \subset \mathcal{R}(T)_{+}$ with $\xi_{m}^{f} \rightarrow 0$ and $\xi_{m}^{g} \rightarrow 0$ in order as $m \rightarrow \infty$, and for all $n \in \mathbb{Z}$ and $m \in \mathbb{N}$,
\begin{gather*}
\Vert f_{n} - T_{n-m}^{n+m}f_{n} \Vert_{T,2} \leq d_{n}^{f} \xi_{m}^{f}, \\
\Vert g_{n} - T_{n-m}^{n+m}g_{n} \Vert_{T,2} \leq d_{n}^{g} \xi_{m}^{g},
\end{gather*}
Then, carrying out the same manipulation from the preceding proof and using the triangle inequality for $T$-conditional norms, we have
\begin{align*}
\Vert f_{n}g_{n} - T_{n-m}^{n+m}f_{n}g_{n} \Vert_{T,1} \leq &\Vert f_{n}(g_{n} - T_{n-m}^{n+m}g_{n}) \Vert_{T,1} \\
&+ \Vert (f_{n} - T_{n-m}^{n+m}f_{n})T_{n-m}^{n+m}g_{n} \Vert_{T,1} \\
&+ \Vert T_{n-m}^{n+m}[(f_{n} - T_{n-m}^{n+m}f_{n})(g_{n} - T_{n-m}^{n+m}g_{n})] \Vert_{T,1}.
\end{align*}
For the first term, since $g_{n} - T_{\, n-m}^{n+m}g_{n} \in \mathcal{L}^{2}(T)$, we can apply H\"{o}lder's inequality,
\begin{align*}
\Vert f_{n}(g_{n} - T_{n-m}^{n+m}g_{n}) \Vert_{T,1} &\leq \Vert f_{n} \Vert_{T,2} \Vert g_{n} - T_{n-m}^{n+m}g_{n} \Vert_{T,2} \\
&\leq \Vert f_{n} \Vert_{T,2} d_{n}^{g} \xi_{m}^{g}.
\end{align*}
For the second term, since $f_{n} - T_{n-m}^{n+m}f_{n} \in \mathcal{L}^{2}(T)$, we can again apply H\"{o}lder's inequality and then Jensen's inequality,
\begin{align*}
\Vert (f_{n} - T_{n-m}^{n+m} f_{n}) T_{n-m}^{n+m}g_{n} \Vert_{T,1} &\leq \Vert f_{n} - T_{n-m}^{n+m} f_{n} \Vert_{T,2} \Vert T_{n-m}^{n+m} g_{n} \Vert_{T,2} \\
&\leq d_{n}^{f} \xi_{m}^{f} \Vert g_{n} \Vert_{T,2}.
\end{align*}
For the third term, it follows from H\"{o}lder's inequality and Jensen's inequality that
\begin{align*}
\Vert T_{n-m}^{n+m}[(f_{n} - T_{n-m}^{n+m}f_{n})(g_{n} - T_{n-m}^{n+m}g_{n})] \Vert_{T,1} &\leq \Vert (f_{n} - T_{n-m}^{n+m}f_{n})(g_{n} - T_{n-m}^{n+m}g_{n}) \Vert_{T,1} \\
&\leq \Vert f_{n} - T_{n-m}^{n+m}f_{n} \Vert_{T,2} \Vert g_{n} - T_{n-m}^{n+m}g_{n} \Vert_{T,2} \\
&\leq d_{n}^{f} \xi_{m}^{f} d_{n}^{g} \xi_{m}^{g}.
\end{align*}
Putting all of the above inequalities together, we have, for all $n \in \mathbb{Z}$ and $m \in \mathbb{N}$,
\begin{align*}
\Vert f_{n}g_{n} - T_{n-m}^{n+m}f_{n}g_{n} \Vert_{T,1} &\leq \Vert f_{n} \Vert_{T,2} d_{n}^{g} \xi_{m}^{g} + d_{n}^{f} \Vert g_{n} \Vert_{T,2} \xi_{m}^{f} + d_{n}^{f} d_{n}^{g} \xi_{m}^{f} \xi_{m}^{g} \\
&\leq (d_{n}^{g} \Vert f_{n} \Vert_{T,2} \vee d_{n}^{f} \Vert g_{n} \Vert_{T,2} \vee d_{n}^{f} d_{n}^{g}) (\xi_{m}^{f} + \xi_{m}^{g} + \xi_{m}^{f} \xi_{m}^{g}) \\
&= d_{n} \xi_{m},
\end{align*}
where $d_{n} = d_{n}^{g} \Vert f_{n} \Vert_{T,2} \vee d_{n}^{f} \Vert g_{n} \Vert_{T,2} \vee d_{n}^{f} d_{n}^{g} \in \mathcal{L}^{1}(T)_{+}$ and $\xi_{m} = \xi_{m}^{f} + \xi_{m}^{g} + \xi_{m}^{f} \xi_{m}^{g} \rightarrow 0$ in order as $m \rightarrow \infty$, giving that $(f_{n}g_{n})_{n \in \mathbb{Z}}$ is near-epoch dependent in $\mathcal{L}^{1}(T)$ on $(T_{\, i}^{j})$. \qed
\end{proof}

For proof of the final elementary theorem, which relates to shifts in near-epoch dependent sequences, we require the following lemma.

\begin{lem}
\label{NED inequality lemma}
Let $f \in \mathcal{L}^{p}(T)$, $p \in \{1,2, \infty\}$, and $U$ and $V$ be conditional expectation operators on $E$ compatible with $T$. If $\mathcal{R}(U) \subset \mathcal{R}(V)$, then
\begin{align*}
\Vert f - Vf \Vert_{T,p} \leq 2\Vert f - Uf \Vert_{T,p}.
\end{align*}
\end{lem}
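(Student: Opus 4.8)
The plan is to reduce the whole statement to the single algebraic identity $VU = U$, which is forced by the hypothesis $\mathcal{R}(U) \subset \mathcal{R}(V)$: since $V$ is a linear projection with range $\mathcal{R}(V)$, it acts as the identity on $\mathcal{R}(V)$, and as $Uf \in \mathcal{R}(U) \subset \mathcal{R}(V)$ we obtain $V(Uf) = Uf$.

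First I would record that $Uf, Vf \in \mathcal{L}^{p}(T)$ for $p \in \{1,2,\infty\}$ --- for $p \in \{1,\infty\}$ this is in \cite{mixing}, and for $p = 2$ in \cite{bern} (as already invoked in the proof of Theorem \ref{jensen's inequality}) --- so that all the $T$-conditional $p$-norms below are defined and the triangle inequality (Theorem \ref{riesz space norms} for $p = 2$, together with the analogous facts from \cite{mixing} for $p \in \{1,\infty\}$) is available; in particular $f - Uf \in \mathcal{L}^{p}(T)$.

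Next, using $VU = U$ and linearity, $Vf - Uf = Vf - VUf = V(f - Uf)$, so that
\[
f - Vf = (f - Uf) - (Vf - Uf) = (f - Uf) - V(f - Uf).
\]
Applying the triangle inequality for the $T$-conditional $p$-norm gives
\[
\Vert f - Vf \Vert_{T,p} \leq \Vert f - Uf \Vert_{T,p} + \Vert V(f - Uf) \Vert_{T,p}.
\]
Finally, since $V$ is a conditional expectation operator on $\mathcal{L}^{1}(T)$ compatible with $T$ and $f - Uf \in \mathcal{L}^{p}(T)$, Jensen's inequality (Theorem \ref{jensen's inequality}) yields $\Vert V(f - Uf) \Vert_{T,p} \leq \Vert f - Uf \Vert_{T,p}$; combining the two bounds gives $\Vert f - Vf \Vert_{T,p} \leq 2 \Vert f - Uf \Vert_{T,p}$.

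There is no genuinely hard step here; the only points needing care are justifying $VU = U$ cleanly from $\mathcal{R}(U) \subset \mathcal{R}(V)$ (a projection restricted to its range is the identity) and checking that the hypotheses of Theorem \ref{jensen's inequality} --- compatibility of $V$ with $T$, and membership of $f - Uf$ in $\mathcal{L}^{p}(T)$ --- are met. The constant $2$ is built into the argument: it arises precisely from splitting $f - Vf$ into the two pieces $f - Uf$ and $-V(f - Uf)$, each of which is controlled by $\Vert f - Uf \Vert_{T,p}$.
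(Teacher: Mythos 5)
Your proposal is correct and follows essentially the same route as the paper: the paper sets $g = f - Uf$, uses $VUf = Uf$ (from $Uf \in \mathcal{R}(U) \subset \mathcal{R}(V)$) to get $f - Vf = g - Vg$, and then applies the triangle inequality together with Jensen's inequality (Theorem \ref{jensen's inequality}) to obtain the factor $2$. Your additional remarks on membership in $\mathcal{L}^{p}(T)$ are sensible housekeeping but do not change the argument.
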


\begin{proof}
Let $g = f - Uf$, then since $Uf \in \mathcal{R}(U) \subset \mathcal{R}(V)$,
\begin{align*}
g - Vg = f - Uf - Vf + VUf = f - Uf - Vf + Uf = f - Vf.
\end{align*}
Using the triangle inequality for $T$-conditional norms and Jensen's inequality,
\begin{align*}
\Vert f - Vf \Vert_{T,p} = \Vert g - Vg \Vert_{T,p} \leq \Vert g \Vert_{T,p} + \Vert Vg \Vert_{T,p} \leq 2\Vert g \Vert_{T,p} = 2\Vert f - Uf \Vert_{T,p}. \tag*{\qed}
\end{align*}
\end{proof}

\begin{thm}
\label{NED shift}
If $(f_{n})_{n \in \mathbb{Z}}$ is near-epoch dependent in $\mathcal{L}^{p}(T)$, $p \in \{1,2,\infty\}$, on $(T_{i}^{j})$, then so is $(f_{n+m})_{n \in \mathbb{Z}}$, $m \in \mathbb{N}$ .
\end{thm}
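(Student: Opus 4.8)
The plan is to deduce the shifted near-epoch dependence directly from Lemma~\ref{NED inequality lemma} applied to $(f_{n})$ itself, after pairing up the right nested conditional expectation operators; the only extra work is to absorb the finitely many window sizes for which the natural reindexing is unavailable. To avoid a clash with the running index $m$ of Definition~\ref{NED}, fix the shift as $k \in \mathbb{N}$ and set $g_{n} = f_{n+k}$; then $(g_{n})_{n \in \mathbb{Z}} \subset \mathcal{L}^{p}(T)$, and we have at our disposal sequences $(d_{n})_{n \in \mathbb{Z}} \subset \mathcal{L}^{p}(T)_{+}$ and $(\xi_{m})_{m \in \mathbb{N}} \subset \mathcal{R}(T)_{+}$ with $\xi_{m} \rightarrow 0$ in order witnessing the near-epoch dependence of $(f_{n})$ on $(T_{i}^{j})$.

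First I would treat the bulk case $m > k$. Put $j := m - k \in \mathbb{N}$, and note that the centred window $[(n+k)-j, (n+k)+j] = [n+2k-m, n+m]$ lies inside $[n-m, n+m]$, so iterating the left-hand nesting $\mathcal{R}(T_{i+1}^{l}) \subset \mathcal{R}(T_{i}^{l})$ (lowering the left endpoint $2k$ times, which is legitimate precisely because $m > k$ keeps every intermediate interval nondegenerate) gives $\mathcal{R}(T_{n+k-j}^{n+k+j}) \subset \mathcal{R}(T_{n-m}^{n+m})$. Applying Lemma~\ref{NED inequality lemma} with $U = T_{n+k-j}^{n+k+j}$, $V = T_{n-m}^{n+m}$, $f = f_{n+k}$, and then the near-epoch dependence of $(f_{n})$ at index $n+k$ with window $j$,
\begin{align*}
\Vert g_{n} - T_{n-m}^{n+m} g_{n} \Vert_{T,p} \leq 2 \Vert f_{n+k} - T_{n+k-j}^{n+k+j} f_{n+k} \Vert_{T,p} \leq 2 d_{n+k} \xi_{m-k}.
\end{align*}
For the remaining cases $1 \leq m \leq k$ I would use only the crude bound from Jensen's inequality (Theorem~\ref{jensen's inequality}) for the compatible conditional expectation operator $T_{n-m}^{n+m}$, namely $\Vert g_{n} - T_{n-m}^{n+m} g_{n} \Vert_{T,p} \leq \Vert g_{n} \Vert_{T,p} + \Vert T_{n-m}^{n+m} g_{n} \Vert_{T,p} \leq 2 \Vert f_{n+k} \Vert_{T,p}$.

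To conclude I would amalgamate the two estimates. Put $d_{n}' = d_{n+k} \vee \big( 2 \Vert f_{n+k} \Vert_{T,p} \big) \in \mathcal{L}^{p}(T)_{+}$ and define $(\xi_{m}')_{m \in \mathbb{N}} \subset \mathcal{R}(T)_{+}$ by $\xi_{m}' = e$ for $1 \leq m \leq k$ and $\xi_{m}' = 2 \xi_{m-k}$ for $m > k$; since only the tail of $(\xi_{m}')$ governs its order limit, $\xi_{m}' \rightarrow 0$ in order. As $e$ is the multiplicative unit of the $f$-algebra, $d_{n}' \xi_{m}' = d_{n}' \geq 2 \Vert f_{n+k} \Vert_{T,p}$ when $m \leq k$, while $d_{n}' \xi_{m}' = 2 d_{n}' \xi_{m-k} \geq 2 d_{n+k} \xi_{m-k}$ when $m > k$; combining with the two bounds above, $\Vert g_{n} - T_{n-m}^{n+m} g_{n} \Vert_{T,p} \leq d_{n}' \xi_{m}'$ for all $n \in \mathbb{Z}$ and $m \in \mathbb{N}$, which is exactly the asserted near-epoch dependence of $(f_{n+k})_{n \in \mathbb{Z}}$ in $\mathcal{L}^{p}(T)$ on $(T_{i}^{j})$. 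The only thing that takes care is the index arithmetic in the first step — verifying the interval containment and that each intermediate $T_{i}^{n+m}$ in the chain is legitimate — together with checking that $d_{n}'$ and $\xi_{m}'$ genuinely land in $\mathcal{L}^{p}(T)_{+}$ and $\mathcal{R}(T)_{+}$; there is no real obstacle here, the substance being entirely carried by Lemma~\ref{NED inequality lemma}.
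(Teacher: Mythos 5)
Your proof is correct and follows essentially the same route as the paper's: both rest on Lemma~\ref{NED inequality lemma} applied to a re-centred window contained in $[n-m,n+m]$, followed by a reindexing of the $\xi$-sequence. The only difference is that you handle the finitely many small window sizes $m\le k$ explicitly, via the Jensen bound $2\Vert f_{n+k}\Vert_{T,p}$ and $\xi'_m=e$, whereas the paper simply sets $\xi'_k=\xi_1$ in that range without justification, so your treatment of that corner case is in fact the more careful one.
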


\begin{proof}
By the properties of $(T_{i}^{j})$, we have that $\mathcal{R}(T_{n+m-k}^{n+m+k}) \subset \mathcal{R}(T_{n-k-m}^{n+k+m})$ for fixed $k,m \in \mathbb{N}$, and so applying the preceding lemma, we get
\begin{align*}
\Vert f_{n+m} - T_{n-k-m}^{n+k+m}f_{n+m} \Vert_{T,p} &\leq 2\Vert f_{n+m} - T_{n+m-k}^{n+m+k}f_{n+m} \Vert_{T,p} \\
&\leq 2 d_{n+m} \xi_{k} \\
&= d_{n}^{\, \prime} \xi_{k},
\end{align*}
where $d_{n}^{\, \prime} = 2d_{n+m} \in \mathcal{L}^{p}(T)_{+}$ and $(\xi_{k})_{k \in \mathbb{N}} \subset \mathcal{R}(T)_{+}$ satisfies $\xi_{k} \rightarrow 0$ in order as $k \rightarrow \infty$. Therefore, we can write
\begin{align*}
\Vert f_{n+m} - T_{n-k}^{n+k}f_{n+m} \Vert_{T,p} \leq d_{n}^{\, \prime} \xi_{k}^{\, \prime},
\end{align*}
where
\begin{align*}
\xi_{k}^{\, \prime} = \left\{\begin{array}{lcl} \xi_{1} & \mbox{if} & k \leq m, \\ \xi_{k-m} & \mbox{if} & k > m. \end{array}\right.
\end{align*}
Therefore, since $\xi_{k}^{\, \prime} \rightarrow 0$ in order as $k \rightarrow \infty$, by definition, we have that $(f_{n+m})_{n \in \mathbb{Z}}$ is near-epoch dependent in $\mathcal{L}^{p}(T)$ on $(T_{i}^{j})$. \qed
\end{proof}

The following corollary arises by combining the preceding theorem with Theorems \ref{NED product 1 infinity and infinity} and \ref{NED product 2 and 2}.

\begin{cor}
\label{NED shift and product 1 and infinity}
Let $(f_{n})_{n \in \mathbb{Z}}$ and $(g_{n})_{n \in \mathbb{Z}}$ be near-epoch dependent in $\mathcal{L}^{p}(T)$ and $\mathcal{L}^{q}(T)$, respectively, on $(T_{i}^{j})$. Then $(f_{n+m}g_{n})_{n \in \mathbb{Z}}$ and $(f_{n}g_{n+m})_{n \in \mathbb{Z}}$, $m \in \mathbb{N}$, are near-epoch dependent in $\mathcal{L}^{r}(T)$ on $(T_{i}^{j})$, where $(p,q,r) \in \{(1,\infty,1), (2,2,1), (\infty, \infty, \infty)\}$.
\end{cor}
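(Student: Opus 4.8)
The plan is to combine Theorem \ref{NED shift} (shifts preserve near-epoch dependence) with the two product theorems, Theorem \ref{NED product 1 infinity and infinity} and Theorem \ref{NED product 2 and 2}. The structure is to handle the three exponent triples $(p,q,r)$ simultaneously wherever possible, since the argument is identical in shape: shift one factor, then multiply.

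First I would observe that for $(f_n g_{n+m})_{n\in\mathbb Z}$ the work is immediate. By Theorem \ref{NED shift}, applied to the sequence $(g_n)_{n\in\mathbb Z}$ which is near-epoch dependent in $\mathcal L^q(T)$ on $(T_i^j)$, the shifted sequence $(g_{n+m})_{n\in\mathbb Z}$ is also near-epoch dependent in $\mathcal L^q(T)$ on $(T_i^j)$. Now $(f_n)_{n\in\mathbb Z}$ is near-epoch dependent in $\mathcal L^p(T)$ on $(T_i^j)$, so the pair $(f_n)_{n\in\mathbb Z}$, $(g_{n+m})_{n\in\mathbb Z}$ satisfies the hypotheses of Theorem \ref{NED product 1 infinity and infinity} when $(p,q,r)=(1,\infty,1)$ or $(\infty,\infty,\infty)$, and of Theorem \ref{NED product 2 and 2} when $(p,q,r)=(2,2,1)$. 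In each case the conclusion is that $(f_n g_{n+m})_{n\in\mathbb Z}$ is near-epoch dependent in $\mathcal L^r(T)$ on $(T_i^j)$, as required. For $(f_{n+m}g_n)_{n\in\mathbb Z}$ the same reasoning applies after shifting $(f_n)_{n\in\mathbb Z}$ instead: $(f_{n+m})_{n\in\mathbb Z}$ is near-epoch dependent in $\mathcal L^p(T)$ on $(T_i^j)$ by Theorem \ref{NED shift}, and then multiplying by $(g_n)_{n\in\mathbb Z}$ via Theorem \ref{NED product 1 infinity and infinity} (for the $(1,\infty,1)$ and $(\infty,\infty,\infty)$ cases) or Theorem \ref{NED product 2 and 2} (for the $(2,2,1)$ case) gives near-epoch dependence of $(f_{n+m}g_n)_{n\in\mathbb Z}$ in $\mathcal L^r(T)$ on $(T_i^j)$.

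One small point of care: Theorem \ref{NED product 1 infinity and infinity} is stated with the $\mathcal L^\infty(T)$ factor in the second slot, so in the $(1,\infty,1)$ case one must make sure the $\mathcal L^\infty(T)$ sequence occupies that slot — for $(f_n g_{n+m})_{n\in\mathbb Z}$ this is automatic, and for $(f_{n+m}g_n)_{n\in\mathbb Z}$ one applies the theorem with $(f_{n+m})_{n\in\mathbb Z}$ in the first slot and $(g_n)_{n\in\mathbb Z}$ in the second, which is consistent with $p=1$, $q=\infty$. For the symmetric triple $(\infty,\infty,\infty)$ and for $(2,2,1)$ there is no asymmetry to worry about. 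I expect no genuine obstacle here; the content is entirely in the theorems already proved, and the corollary is a bookkeeping combination of them.
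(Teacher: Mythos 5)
Your proposal is correct and matches the paper's intent exactly: the paper offers no written proof beyond the remark that the corollary ``arises by combining the preceding theorem with Theorems \ref{NED product 1 infinity and infinity} and \ref{NED product 2 and 2},'' which is precisely the shift-then-multiply argument you spell out. Your additional care about which slot the $\mathcal{L}^{\infty}(T)$ factor occupies in the $(1,\infty,1)$ case is a sensible check and introduces no discrepancy.
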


We can now prove, under certain conditions, that mixing processes induce mixingales through near-epoch dependence.

\begin{thm}
\label{NED on mixing is mixingale}
Let $(f_{n})_{n \in \mathbb{Z}} \subset \mathcal{L}^{\infty}(T)$ be near-epoch dependent in $\mathcal{L}^{p}(T)$, $p \in \{1, 2, \infty\}$, on $(T_{i}^{j})$. If $(T_{i}^{j})$ is $\alpha_{T}$-mixing or $\varphi_{T}$-mixing and $(f_{n})_{n \in \mathbb{Z}}$ has $T$-conditional mean zero, then the double sequence $(f_{n}, T_{-\infty}^{n})_{n \in \mathbb{Z}}$ is a mixingale in $\mathcal{L}^{1}(T)$.
\end{thm}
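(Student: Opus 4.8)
The plan is to verify the two defining inequalities of Definition \ref{mixingale} for the filtration $(T_{-\infty}^{n})_{n\in\mathbb{Z}}$ directly, after two reductions. First, since a $\varphi_{T}$-mixing family is $\alpha_{T}$-mixing by Lemma \ref{mixing coefficients inequality} and near-epoch dependence in $\mathcal{L}^{p}(T)$ entails near-epoch dependence in $\mathcal{L}^{1}(T)$ by Lyapunov's inequality (Theorem \ref{riesz space lyapunov's inequality}), we may assume $(T_{i}^{j})$ is $\alpha_{T}$-mixing and work throughout in the $\Vert\cdot\Vert_{T,1}$-norm, downgrading a near-epoch estimate from $\Vert\cdot\Vert_{T,p}$ to $\Vert\cdot\Vert_{T,1}$ whenever it is invoked. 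Second, the nesting $\mathcal{R}(T_{i}^{n})\subseteq\mathcal{R}(T_{i}^{n+1})$ forces $\mathcal{R}(T_{-\infty}^{m})\subseteq\mathcal{R}(T_{-\infty}^{n})$ for $m\le n$, so $(T_{-\infty}^{n})_{n\in\mathbb{Z}}$ is a filtration compatible with $T$ (compatibility being part of the construction via \cref{ando-douglas}), and $T$-conditional mean zero means $Tf_{n}=0$ for all $n$. Writing $(d_{n})_{n\in\mathbb{Z}}\subset\mathcal{L}^{p}(T)_{+}$ and $(\xi_{m})_{m\in\mathbb{N}}\subset\mathcal{R}(T)_{+}$ for the sequences of Definition \ref{NED}, we set $c_{n}=d_{n}\vee\Vert f_{n}\Vert_{T,\infty}$, which lies in $\mathcal{L}^{1}(T)_{+}$ since $\mathcal{L}^{\infty}(T)\subseteq\mathcal{L}^{p}(T)\subseteq\mathcal{L}^{1}(T)$ and $\mathcal{R}(T)_{+}\subseteq\mathcal{L}^{1}(T)_{+}$; thus $d_{n}\le c_{n}$ and $\Vert f_{n}\Vert_{T,\infty}\le c_{n}$.

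Condition (ii) of Definition \ref{mixingale} is immediate: since $\mathcal{R}(T_{n-m}^{n+m})\subseteq\mathcal{R}(T_{-\infty}^{n+m})$, Lemma \ref{NED inequality lemma} followed by Lyapunov's inequality and near-epoch dependence gives
\begin{align*}
\Vert f_{n}-T_{-\infty}^{n+m}f_{n}\Vert_{T,1}\le 2\Vert f_{n}-T_{n-m}^{n+m}f_{n}\Vert_{T,1}\le 2\Vert f_{n}-T_{n-m}^{n+m}f_{n}\Vert_{T,p}\le 2d_{n}\xi_{m}\le 2c_{n}\xi_{m}.
\end{align*}

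Condition (i) is the crux, and for it we use a blocking argument trading the near-epoch error against the mixing error. Fix $n\in\mathbb{Z}$ and $m\ge 2$, set $m'=\lfloor m/2\rfloor$ so that $m-m'=\lceil m/2\rceil\ge 1$, put $g=T_{n-m'}^{n+m'}f_{n}$, and split $T_{-\infty}^{n-m}f_{n}=T_{-\infty}^{n-m}(f_{n}-g)+T_{-\infty}^{n-m}g$. Jensen's inequality (Theorem \ref{jensen's inequality}), Lyapunov's inequality and near-epoch dependence bound the first summand by $\Vert f_{n}-g\Vert_{T,1}=\Vert f_{n}-T_{n-m'}^{n+m'}f_{n}\Vert_{T,1}\le\Vert f_{n}-T_{n-m'}^{n+m'}f_{n}\Vert_{T,p}\le d_{n}\xi_{m'}\le c_{n}\xi_{m'}$. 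For the second summand, $g\in\mathcal{R}(T_{n-m'}^{n+m'})\subseteq\mathcal{R}(T_{n-m'}^{\infty})$, $g\in\mathcal{L}^{\infty}(T)$ with $\Vert g\Vert_{T,\infty}\le\Vert f_{n}\Vert_{T,\infty}$ by Jensen's inequality, and $Tg=TT_{n-m'}^{n+m'}f_{n}=Tf_{n}=0$ by compatibility; hence the mixing inequality (Theorem \ref{alpha mixing inequality}) applied with $U=T_{-\infty}^{n-m}$, $V=T_{n-m'}^{\infty}$, together with $\alpha_{T}(T_{-\infty}^{n-m},T_{n-m'}^{\infty})\le\alpha_{T,m-m'}$ (Definition \ref{riesz space mixing sequences}, the window endpoints $n-m$ and $n-m'$ being $m-m'$ apart), yields
\begin{align*}
\Vert T_{-\infty}^{n-m}g\Vert_{T,1}=\Vert T_{-\infty}^{n-m}g-Tg\Vert_{T,1}\le 4\alpha_{T,m-m'}\Vert g\Vert_{T,\infty}\le 4\alpha_{T,\lceil m/2\rceil}c_{n}.
\end{align*}
Adding the two bounds, $\Vert T_{-\infty}^{n-m}f_{n}\Vert_{T,1}\le c_{n}\psi_{m}$ with $\psi_{m}:=\xi_{\lfloor m/2\rfloor}+4\alpha_{T,\lceil m/2\rceil}\to 0$ in order as $m\to\infty$, while for $m=1$ the trivial estimate $\Vert T_{-\infty}^{n-1}f_{n}\Vert_{T,1}\le\Vert f_{n}\Vert_{T,1}\le\Vert f_{n}\Vert_{T,\infty}\le c_{n}$ serves.

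It then remains to assemble the mixingale numbers: put $\phi_{1}=e$ and $\phi_{m}=\psi_{m}\vee 2\xi_{m-1}\in\mathcal{R}(T)_{+}$ for $m\ge 2$. Then $\phi_{m}\to 0$ in order (only $\phi_1$ is large), and the two displays above read $\Vert T_{-\infty}^{n-m}f_{n}\Vert_{T,1}\le c_{n}\phi_{m}$ (using $c_n e=c_n$ at $m=1$) and $\Vert f_{n}-T_{-\infty}^{n+m}f_{n}\Vert_{T,1}\le 2c_{n}\xi_{m}\le c_{n}\phi_{m+1}$, so $(f_{n},T_{-\infty}^{n})_{n\in\mathbb{Z}}$ is a mixingale in $\mathcal{L}^{1}(T)$; the $\varphi_{T}$-mixing case follows from the $\alpha_{T}$-mixing one. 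The main obstacle is expected to be the blocking step in condition (i): one must interpose a truncation $g=T_{n-m'}^{n+m'}f_{n}$ of the right width $m'\approx m/2$, observe that $g$ belongs to $\mathcal{R}(T_{n-m'}^{\infty})$ so that the mixing inequality applies with gap $m-m'$, and then route the two $\mathcal{R}(T)_{+}$-valued error terms into a single pair $(c_{n},\phi_{m})$ satisfying both clauses of Definition \ref{mixingale}; the remaining points (the filtration property of $(T_{-\infty}^{n})$, $g\in\mathcal{L}^{\infty}(T)$, and $Tg=0$) are routine.
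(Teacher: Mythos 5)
Your proposal is correct and follows essentially the same route as the paper's proof: the same half-window blocking $f_n=(f_n-T_{n-\lfloor m/2\rfloor}^{n+\lfloor m/2\rfloor}f_n)+T_{n-\lfloor m/2\rfloor}^{n+\lfloor m/2\rfloor}f_n$, the same use of Jensen's and Lyapunov's inequalities together with the mixing inequalities of Theorems \ref{alpha mixing inequality} and \ref{phi mixing inequality}, and the same choice $c_n=d_n\vee\Vert f_n\Vert_{T,\infty}$. The only differences are bookkeeping (reducing to the $\alpha_T$ case instead of carrying a minimum, treating $m=1$ separately, and absorbing the two error terms into $\phi_m$ via a supremum rather than assuming $(\phi_m)$ decreasing), which if anything handle the small-$m$ edge cases slightly more carefully than the paper.
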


\begin{proof}
For fixed $m \in \mathbb{N}$, let $k = \lfloor \frac{m}{2} \rfloor$ be the largest integer not exceeding $\frac{m}{2}$. To verify Definition \ref{mixingale} (i), we note
\begin{align*}
\Vert T_{-\infty}^{n-m} f_{n}\Vert_{T,1} &= \Vert T_{-\infty}^{n-m} ( f_{n} - T_{n-k}^{n+k}f_{n} + T_{n-k}^{n+k}f_{n}) \Vert_{T,1} \\
&\leq \Vert T_{-\infty}^{n-m}(f_{n} - T_{n-k}^{n+k} f_{n})\Vert_{T,1} + \Vert T_{-\infty}^{n-m} T_{n-k}^{n+k} f_{n} \Vert_{T,1}.
\end{align*}
For the first term, using Jensen's inequality and Lyapunov's inequality, respectively,
\begin{align*}
\Vert T_{-\infty}^{n-m}(f_{n} - T_{n-k}^{n+k} f_{n})\Vert_{T,1} &\leq \Vert f_{n} - T_{n-k}^{n+k} f_{n} \Vert_{T,1} \\
&\leq \Vert f_{n} - T_{n-k}^{n+k}f_{n} \Vert_{T,p} \\
&\leq d_{n} \xi_{k},
\end{align*}
where $(d_{n})_{n \in \mathbb{Z}}$ and $(\xi_{m})_{m \in \mathbb{N}}$ are defined as in Definition \ref{NED}. For the second term, note that $T_{n-k}^{n+k} f_{n} \in \mathcal{R}(T_{n-k}^{n+k}) \subset \mathcal{R}(T_{n-k}^{\infty})$. Now, for all $n \in \mathbb{Z}$, since $f_{n} \in \mathcal{L}^{\infty}(T)$, there exists $g_{n} \in \mathcal{R}(T)_{+}$ such that $|f_{n}| \leq g_{n}$, and so
\begin{align*}
|T_{n-k}^{n+k} f_{n}| \leq T_{n-k}^{n+k} |f_{n}|  \leq T_{n-k}^{n+k} g_{n} = T_{n-k}^{n+k} T g_{n} = Tg_{n} = g_{n},
\end{align*}
giving that $T_{n-k}^{n+k} f_{n} \in \mathcal{L}^{\infty}(T)$. Therefore, since $TT_{n-k}^{n+k} f_{n} = 0$, which follows from the compatibility of $(T_{i}^{j})$ with $T$ and the supposition that $Tf_{n} = 0$, we can apply Theorems \ref{alpha mixing inequality} and \ref{phi mixing inequality} to obtain
\begin{align*}
\Vert T_{-\infty}^{n-m} T_{n-k}^{n+k} f_{n} \Vert_{T,1} &= \Vert T_{-\infty}^{n-m} T_{n-k}^{n+k} f_{n} - T T_{n-k}^{n+k} f_{n} \Vert_{T,1} \\
&\leq 2 \min\{2 \alpha_{T}(T_{-\infty}^{n-m}, T_{n-k}^{\infty}), \varphi_{T}(T_{-\infty}^{n-m}, T_{n-k}^{\infty})\} \Vert f_{n} \Vert_{T,\infty} \\
&= 2 \min\{2 \alpha_{T}(T_{-\infty}^{n-m}, T_{n-m+k}^{\infty}), \varphi_{T}(T_{-\infty}^{n-m}, T_{n-m+k}^{\infty})\} \Vert f_{n} \Vert_{T,\infty} \\
&\leq 2 \min\{2 \alpha_{T,k}, \varphi_{T,k}\} \Vert f_{n} \Vert_{T,\infty}.
\end{align*}
Combining the above results gives
\begin{align*}
\Vert T_{-\infty}^{n-m}f_{n} \Vert_{T,1} &\leq d_{n} \xi_{k} + 2 \min\{2\alpha_{T,k} , \varphi_{T,k} \} \Vert f_{n} \Vert_{T,\infty} \\
&\leq c_{n} \phi_{m},
\end{align*}
where $c_{n} = d_{n} \vee \Vert f_{n} \Vert_{T,\infty} \in \mathcal{L}^{1}(T)_{+}$ and $\phi_{m} = 2(\xi_{k} + \min\{2 \alpha_{T,k}, \varphi_{T,k}\}) \in \mathcal{R}(T)_{+}$, for all $n \in \mathbb{Z}$ and $m \in \mathbb{N}$. Note that by supposition, $\phi_{m} \rightarrow 0$ in order as $m \rightarrow \infty$. For Definition \ref{mixingale} (ii), since $\mathcal{R}(T_{n-k-1}^{n+k+1}) \subset \mathcal{R}(T_{n-m}^{n+m}) \subset \mathcal{R}(T_{-\infty}^{n+m})$, we have, using Lemma \ref{NED inequality lemma}, that
\begin{align*}
\Vert f_{n} - T_{-\infty}^{n+m}f_{n} \Vert_{T,1} &\leq 2 \Vert f_{n} - T_{n-k-1}^{n+k+1}f_{n} \Vert_{T,1} \\
&\leq 2 \Vert f_{n} - T_{n-k-1}^{n+k+1} f_{n} \Vert_{T,p} \\
&\leq 2 d_{n} \xi_{k+1} \\
&\leq c_{n} 2(\xi_{k+1} + \min\{2\alpha_{T,k+1}, \varphi_{T,k+1}\}) \\
&= c_{n} \phi_{m+2} \\
&\leq c_{n} \phi_{m+1},
\end{align*}
where the final inequality follows since it can be assumed, without loss of generality, that $(\phi_{m})_{m \in \mathbb{N}}$ is a decreasing sequence. This concludes the proof. \qed
\end{proof}

The preceding theorem carries significant implications for the study of near-epoch dependence. This is the case since it is now possible, by appealing to the theory developed for mixingales, to establish important results for near-epoch dependent sequences that are otherwise inaccessible. As an example, combining Theorems \ref{mixingale weak law of large numbers} and \ref{NED on mixing is mixingale} gives the following corollary.

\begin{cor}
\label{NED weak law of large numbers}
Let $(f_{n})_{n \in \mathbb{Z}} \subset \mathcal{L}^{\infty}(T)$ be near-epoch dependent in $\mathcal{L}^{p}(T)$, $p \in \{1,2,\infty\}$, on $(T_{i}^{j})$ with $(d_{n})_{n \in \mathbb{Z}}$ as defined in Definition \ref{NED}, and where $|f_{n}| \leq g_{n} \in \mathcal{R}(T)_{+}$ for all $n \in \mathbb{Z}$. Furthermore, suppose that $(f_{n})_{n \in \mathbb{Z}}$ is $T$-uniform and has $T$-conditional mean zero, and that $(T_{i}^{j})$ is $\alpha_{T}$-mixing or $\varphi_{T}$-mixing.
\begin{enumerate}
\item[(i)] If $\displaystyle{\left(\frac{1}{m} \sum_{i=n+1}^{n+m} d_{i} \vee g_{i} \right)_{m \in \mathbb{N}}}$ is bounded in $E$ uniformly in $n \in \mathbb{Z}$, then
\begin{align*}
T|\overline{f}_{n,m}| = T\left|\frac{1}{m} \sum_{i=n+1}^{n+m} f_{i} \right| \rightarrow 0 \text{ in order as } m \rightarrow \infty, \text{ uniformly in } n \in \mathbb{Z}.
\end{align*}
\item[(ii)] If $d_{n} \vee g_{n} = T|f_{n}|$ for all $n \in \mathbb{Z}$, then
\begin{align*}
T|\overline{f}_{n,m}| = T\left|\frac{1}{m} \sum_{i=n+1}^{n+m} f_{i} \right| \rightarrow 0 \text{ in order as } m \rightarrow \infty, \text{ uniformly in } n \in \mathbb{Z}.
\end{align*}
\end{enumerate}
\end{cor}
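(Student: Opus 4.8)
\proof
The plan is to derive the corollary by chaining Theorems \ref{NED on mixing is mixingale} and \ref{mixingale weak law of large numbers}: the hypotheses here are precisely those of the former, so I would first use it to produce a mixingale, and then check that the mixingale data it yields satisfy the boundedness conditions of the latter. Concretely, since $(f_{n})_{n\in\mathbb{Z}}\subset\mathcal{L}^{\infty}(T)$ is near-epoch dependent in $\mathcal{L}^{p}(T)$ on $(T_{i}^{j})$, the family $(T_{i}^{j})$ is $\alpha_{T}$- or $\varphi_{T}$-mixing, and $(f_{n})_{n\in\mathbb{Z}}$ has $T$-conditional mean zero, Theorem \ref{NED on mixing is mixingale} gives that $(f_{n},T_{-\infty}^{n})_{n\in\mathbb{Z}}$ is a mixingale in $\mathcal{L}^{1}(T)$. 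Reading off its proof, the mixingale constants may be taken to be $c_{n}=d_{n}\vee\Vert f_{n}\Vert_{T,\infty}\in\mathcal{L}^{1}(T)_{+}$ and $\phi_{m}=2\bigl(\xi_{\lfloor m/2\rfloor}+\min\{2\alpha_{T,\lfloor m/2\rfloor},\varphi_{T,\lfloor m/2\rfloor}\}\bigr)\in\mathcal{R}(T)_{+}$, with $(d_{n})$, $(\xi_{m})$ as in Definition \ref{NED} and $\phi_{m}\to0$ in order; since the conclusion only involves $T|\cdot|$, this $\mathcal{L}^{1}(T)$-mixingale structure already suffices.

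Next I would replace $c_{n}$ by $c_{n}'=d_{n}\vee g_{n}$. This is allowed because $|f_{n}|\le g_{n}\in\mathcal{R}(T)_{+}$ forces $\Vert f_{n}\Vert_{T,\infty}\le g_{n}$, hence $c_{n}\le c_{n}'$, and the inequalities in Definition \ref{mixingale} persist under enlarging the $c_{n}$, multiplication by the positive elements $\phi_{m},\phi_{m+1}$ of $\mathcal{R}(T)$ being monotone on the $\mathcal{R}(T)$-module $\mathcal{L}^{1}(T)$. Thus $(f_{n},T_{-\infty}^{n})_{n\in\mathbb{Z}}$ is a mixingale in $\mathcal{L}^{1}(T)$ with constants $c_{n}'$ and the same $\phi_{m}$, and it is $T$-uniform since $(f_{n})_{n\in\mathbb{Z}}$ is $T$-uniform by hypothesis. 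Part (i) then follows from Theorem \ref{mixingale weak law of large numbers}(i), because $\bigl(\tfrac1m\sum_{i=n+1}^{n+m}c_{i}'\bigr)_{m}=\bigl(\tfrac1m\sum_{i=n+1}^{n+m}d_{i}\vee g_{i}\bigr)_{m}$ is bounded in $E$ uniformly in $n$; and part (ii) follows from Theorem \ref{mixingale weak law of large numbers}(ii), since the hypothesis $d_{n}\vee g_{n}=T|f_{n}|$ says exactly that $c_{n}'=T|f_{n}|$.

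I expect the only real obstacle to be the bookkeeping of constants: one has to be sure that the $c_{n}$ coming out of the proof of Theorem \ref{NED on mixing is mixingale} are dominated by $d_{n}\vee g_{n}$ (via $\Vert f_{n}\Vert_{T,\infty}\le g_{n}$), and that enlarging mixingale constants is legitimate, so that the hypotheses of Theorem \ref{mixingale weak law of large numbers} --- stated in terms of $d_{n}\vee g_{n}$ rather than the $c_{n}$ produced above --- are genuinely met. Once this is in place the corollary is a direct consequence of the two cited theorems, with no new estimates required. \qed
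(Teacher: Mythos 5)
Your proposal is correct and follows exactly the route the paper intends: the corollary is stated there as an immediate combination of Theorems \ref{NED on mixing is mixingale} and \ref{mixingale weak law of large numbers}, and your bookkeeping (reading off $c_{n}=d_{n}\vee\Vert f_{n}\Vert_{T,\infty}$ from the proof of the former, noting $\Vert f_{n}\Vert_{T,\infty}\le g_{n}$, and enlarging to $c_{n}'=d_{n}\vee g_{n}$) is precisely the verification the paper leaves implicit. No issues.
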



\newsection{Application to autoregressives process of order $\mathbf{1}$}

To demonstrate the workings of Definition \ref{NED}, we will consider a non-trivial example of a near-epoch dependent process, that of an autoregressive process of order 1 in $\mathcal{L}^{2}(T)$. To enable such a treatment, we require several preliminary results, the first of which is proved in \cite{bern}.

\begin{lem}
\label{Sf squared in R(S)}
Let $E$ be a $T$-universally complete Riesz space with weak order unit $e = Te$, where $T$ is a strictly positive conditional expectation operator on $E$, and let $U$ and $V$ be conditional expectation operators on $E$ compatible with $T$. If $g,h \in \mathcal{L}^{2}(T)$ and $VU = UV = V$, then $U(g \cdot Vh) = Vh \cdot Ug$.
\end{lem}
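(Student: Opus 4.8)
The plan is to verify the identity $U(g\cdot Vh) = Vh\cdot Ug$ by first reducing to the case where the factor $Vh$ lies in the range of $U$, and then invoking the averaging property of conditional expectation operators. The key observation is that, although $Vh \in \mathcal{R}(V)$ and $V = VU = UV$, we have $\mathcal{R}(V) \subseteq \mathcal{R}(U)$: indeed, for any $w \in \mathcal{R}(V)$ one has $w = Vw = UVw = U(Vw)$, so $w \in \mathcal{R}(U)$. Hence $Vh \in \mathcal{R}(U)$, and if the product $g\cdot Vh$ lies in a suitable domain, the averaging identity $U(g\cdot Vh) = Vh\cdot Ug$ follows directly from the fact, recalled in the preliminaries from \cite{condexp} (extended to $\mathrm{dom}(T)$), that $T$, and likewise any compatible conditional expectation operator such as $U$, is an averaging operator.

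The main technical point to address is the membership of the products in the right spaces so that the averaging property applies on $\mathcal{L}^{1}(T) = \mathrm{dom}(T)$. First I would note that since $g, h \in \mathcal{L}^{2}(T)$ and $Vh \in \mathcal{L}^{2}(T)$ (as $V$ is a conditional expectation operator compatible with $T$, so it maps $\mathcal{L}^{2}(T)$ into itself, cf.\ the proof of Theorem \ref{jensen's inequality} via \cite{bern}), the product $g\cdot Vh \in \mathcal{L}^{1}(T)$ by the remark following Definition \ref{LpT spaces}. Similarly $Ug \in \mathcal{L}^{2}(T)$, so $Vh\cdot Ug \in \mathcal{L}^{1}(T)$ as well. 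Both sides therefore live in $\mathcal{L}^{1}(T)$, the natural domain of $U$, and the averaging property $U(\psi\cdot \chi) = \psi\cdot U\chi$ for $\psi \in \mathcal{R}(U)$, $\chi \in \mathcal{L}^{1}(T)$ with $\psi\chi \in \mathcal{L}^{1}(T)$, gives exactly $U(Vh\cdot g) = Vh\cdot Ug$.

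The step I expect to be the main obstacle is justifying that the averaging identity is available in the form just used on the generalized $L^1$ space rather than merely for $f$-bounded elements: one must be slightly careful that $Vh$, while in $\mathcal{R}(U)$, need not be in $\mathcal{R}(U) \cap \mathcal{L}^{\infty}(T)$. This is handled by a standard approximation/band-truncation argument — replacing $Vh$ by $P_{(|Vh|-ce)^{+}}^{\perp}Vh$ (an $e$-bounded element of $\mathcal{R}(U)$), applying the $f$-bounded averaging identity there, and passing to the order limit as $c\to\infty$ using the order continuity of $U$ and of the multiplication in $E^{u}$ — or, more directly, by citing the extension of the averaging property to $\mathcal{L}^{1}(T)$ established in \cite{condexp, mixing}. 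With that in place the computation is immediate, and no further estimates are needed.
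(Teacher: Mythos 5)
Your proof is correct: the identity $UV=V$ gives $\mathcal{R}(V)\subseteq\mathcal{R}(U)$, the $\mathcal{L}^{2}(T)$ hypotheses place all products in $\mathcal{L}^{1}(T)=E$, and the averaging property of $U$ then yields $U(Vh\cdot g)=Vh\cdot Ug$ directly (your worry about extending the averaging identity beyond $e$-bounded elements is already absorbed by the hypothesis that $E$ is $T$-universally complete, so the form recalled in Section 2 applies as stated). The paper itself does not prove this lemma but defers to \cite{bern}; your argument is the standard one and supplies exactly the missing details.
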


\begin{thm}
\label{mse minimised}
Let $E$ be a $T$-universally complete Riesz space with weak order unit $e=Te$, where $T$ is a strictly positive conditional expectation operator on $E$, and let $S$ be a conditional expectation operator on $E$ compatible with $T$. For $f \in \mathcal{L}^{2}(T)$, $\left\Vert f-g \right\Vert_{T,2}$ is minimised over $g \in \mathcal{R}(S) \cap \mathcal{L}^{2}(T)$ by $g = Sf$.
\end{thm}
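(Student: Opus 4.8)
The plan is to establish a Riesz-space Pythagorean identity for the decomposition $f-g=(f-Sf)+(Sf-g)$ and then conclude by the monotonicity of the square root recorded immediately after Lemma \ref{square root}. First I would note that $Sf$ is a legitimate competitor: $Sf\in\mathcal{R}(S)$ since $S$ is a projection, and $Sf\in\mathcal{L}^{2}(T)$ by the fact (used already in the proof of Theorem \ref{jensen's inequality}, citing \cite{bern}) that $S$ maps $\mathcal{L}^{2}(T)$ into itself. Hence $\mathcal{R}(S)\cap\mathcal{L}^{2}(T)$ is nonempty and it suffices to prove $\Vert f-g\Vert_{T,2}\geq\Vert f-Sf\Vert_{T,2}$ for every $g\in\mathcal{R}(S)\cap\mathcal{L}^{2}(T)$, with equality at $g=Sf$.

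Fix such a $g$ and set $w=Sf-g$, so that $w\in\mathcal{R}(S)\cap\mathcal{L}^{2}(T)$ (as $\mathcal{R}(S)$ is a subspace and $\mathcal{L}^{2}(T)$ is an $\mathcal{R}(T)$-module by Theorem \ref{Lp(T) are R(T)-modules}) and $f-g=(f-Sf)+w$. Since $f-Sf$ and $w$ both lie in $\mathcal{L}^{2}(T)$, the products $(f-Sf)^{2}$, $(f-Sf)w$ and $w^{2}$ all lie in $\mathcal{L}^{1}(T)$, so $T$ is additive across the expansion. The crucial step is that the cross term vanishes: because $w=Sw$, apply Lemma \ref{Sf squared in R(S)} with $U=V=S$ (the hypothesis $VU=UV=V$ holds since $SS=S$) to $g\leftarrow f-Sf$ and $h\leftarrow w$ to get $S\big((f-Sf)\,Sw\big)=Sw\cdot S(f-Sf)=w\cdot S(f-Sf)$. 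As $S$ is a projection, $S(f-Sf)=Sf-S^{2}f=0$, so $S\big[(f-Sf)w\big]=0$; applying $T$ and using the compatibility $TS=T$ gives $T\big[(f-Sf)w\big]=TS\big[(f-Sf)w\big]=0$.

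Expanding the square then yields the Pythagorean identity
\begin{align*}
\Vert f-g\Vert_{T,2}^{2}=T|f-g|^{2}=T(f-Sf)^{2}+2T\big[(f-Sf)w\big]+Tw^{2}=\Vert f-Sf\Vert_{T,2}^{2}+\Vert w\Vert_{T,2}^{2}.
\end{align*}
Since $\Vert w\Vert_{T,2}^{2}=Tw^{2}\in\mathcal{R}(T)_{+}$, this gives $\Vert f-g\Vert_{T,2}^{2}\geq\Vert f-Sf\Vert_{T,2}^{2}$ in $E$, and the monotonicity of the square root gives $\Vert f-g\Vert_{T,2}\geq\Vert f-Sf\Vert_{T,2}$. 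Choosing $g=Sf$ forces $w=0$, so the bound is attained, which proves that $g=Sf$ minimises $\Vert f-g\Vert_{T,2}$ over $\mathcal{R}(S)\cap\mathcal{L}^{2}(T)$.

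I expect the only delicate point to be the vanishing of the cross term: one must ensure all the relevant products stay in $\mathcal{L}^{1}(T)$ so that $T$ and $S$ are defined and additive on them, and one needs the averaging property of $S$ for a product of two $\mathcal{L}^{2}(T)$ elements (not merely an $\mathcal{R}(S)$ factor times an element of $E$) — this is precisely what Lemma \ref{Sf squared in R(S)} supplies. Everything else is a routine expansion together with the $\mathcal{R}(T)_{+}$-valued norm properties from Theorem \ref{riesz space norms} and the square-root monotonicity.
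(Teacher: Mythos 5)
Your proposal is correct and follows essentially the same route as the paper: the same decomposition $f-g=(f-Sf)+(Sf-g)$, the same expansion of $T(f-g)^{2}$, and the same appeal to Lemma \ref{Sf squared in R(S)} with $U=V=S$ to kill the cross term. The only (harmless) difference is that you apply that lemma once to the whole product $(f-Sf)\cdot w$ and use $S(f-Sf)=0$, whereas the paper expands the cross term into four pieces and applies the averaging identities to $T(f\cdot Sf)$ and $T(fg)$ separately.
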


\begin{proof}
Recall that $Sf \in \mathcal{L}^{2}(T)$. Then for $g \in \mathcal{R}(S) \cap \mathcal{L}^{2}(T)$,
\begin{align}
\label{mse min}
\left\Vert f - g \right\Vert_{T,2}^{2} &= T|f-g|^{2} \nonumber\\
&= T(f-g)^{2} \nonumber \\
&= T(f - Sf + Sf - g)^{2} \nonumber \\
&= T(f - Sf)^{2} + 2T[(f-Sf)(Sf-g)] + T(Sf-g)^{2}.
\end{align}
Squaring out the middle term, we have
\begin{align*}
T[(f-Sf)(Sf-g)] &= T\big(f \cdot Sf - fg - (Sf)^{2} + g \cdot Sf\big) \\
&= T(f \cdot Sf) - T(Sf)^{2} + T(g \cdot Sf) -T(fg).
\end{align*}
Using Lemma \ref{Sf squared in R(S)} with $U=V=S$, we have
\begin{align*}
T(f \cdot Sf) = TS(f \cdot Sf) = T(Sf \cdot Sf) = T(Sf)^{2}.
\end{align*}
Since $g \in \mathcal{R}(S)$, we have
\begin{align*}
T(fg) = T(f \cdot Sg) = TS(f \cdot Sg) = T(Sg \cdot Sf) = T(g \cdot Sf).
\end{align*}
Therefore, the middle term of (\ref{mse min}) is zero, which gives
\begin{align*}
\left\Vert f-g \right\Vert_{T,2}^{2} = T(f - Sf)^{2} + T(Sf - g)^{2}.
\end{align*}
Since the first term above is independent of $g$ and the second term is necessarily non-negative, it follows that the minimum is attained by setting $g = Sf$. \qed
\end{proof}

\begin{lem}
\label{infinite sum in R(T)}
Let $E$ be a $T$-universally complete Riesz space with weak order unit $e = Te$, where $T$ is a strictly positive conditional expectation operator on $E$. Let $f \in \mathcal{R}(T)_{+}$ such that $P_{(e-f)^{+}} = I$, then $\sum_{i=0}^{\infty} f^{i}$ converges in order in $\mathcal{R}(T)$ to $\frac{e}{e-f}$.
\end{lem}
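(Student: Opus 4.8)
The plan is to first make sense of $\frac{e}{e-f}$ as an element of $\mathcal{R}(T)$, and then to deduce the order limit from a telescoping identity. Since $(e-f)^{+}$ and $(e-f)^{-}$ are disjoint, the band projection onto the band generated by $(e-f)^{+}$ annihilates $(e-f)^{-}$; as this projection is the identity by hypothesis, $(e-f)^{-}=0$, that is, $0\le f\le e$ and $e-f$ is a weak order unit of $\mathcal{R}(T)$. Because $E$ is $T$-universally complete, $\mathcal{R}(T)$ is a universally complete $f$-algebra, in which weak order units are invertible; hence $w:=(e-f)^{-1}=\frac{e}{e-f}\in\mathcal{R}(T)_{+}$ is well defined. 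I expect this to be the delicate point: it is precisely where the hypothesis $P_{(e-f)^{+}}=I$ and the universal completeness of $\mathcal{R}(T)$ are used to legitimise the symbol $\frac{e}{e-f}$, and everything afterwards is routine.

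Next I would telescope. With $s_{n}:=\sum_{i=0}^{n}f^{i}\in\mathcal{R}(T)_{+}$, expanding the product gives $(e-f)s_{n}=e-f^{\,n+1}$, so multiplying by $w$ yields $s_{n}=w-wf^{\,n+1}$, equivalently $\frac{e}{e-f}-s_{n}=wf^{\,n+1}\ge 0$. Since $(s_{n})$ is increasing (a sum of positive terms), it remains only to show that $wf^{\,n+1}\downarrow 0$ in order.

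Finally, since $0\le f\le e$, the sequence $(f^{n})$ is decreasing and order bounded, so by Dedekind completeness $f^{n}\downarrow g:=\inf_{n}f^{n}\ge 0$. Using the order continuity of the $f$-algebra multiplication noted in Section 2, $fg=\inf_{n}f^{\,n+1}=g$, hence $(e-f)g=0$, and multiplying by $w$ gives $g=0$. Applying order continuity of multiplication by $w$ once more gives $wf^{\,n+1}\downarrow 0$, so that $s_{n}\uparrow\frac{e}{e-f}$ in order, which is the assertion.
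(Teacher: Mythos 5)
Your proof is correct, but it follows a genuinely different route from the paper's. The paper identifies the limit by the same telescoping computation, but establishes convergence directly: it decomposes the identity into the mutually disjoint band projections $Q_{n} = P_{((1-2^{-n})e-f)^{+}}\bigl(I-P_{((1-2^{-(n-1)})e-f)^{+}}\bigr)$, observes that on the range of $Q_{m}$ the partial sums are dominated by $2^{m}Q_{m}e$, and uses universal completeness of $\mathcal{R}(T)$ to form the upper bound $g=\bigvee_{n}2^{n}Q_{n}e$, after which Dedekind completeness gives order convergence. You instead take as given that in the universally complete unital $f$-algebra $\mathcal{R}(T)$ every positive weak order unit is invertible, so that $w=(e-f)^{-1}$ exists a priori, and then the telescoping identity $s_{n}=w-wf^{\,n+1}$ together with order continuity of multiplication finishes the argument. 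You have correctly located the delicate point: the existence of $w$ is exactly the content that the paper's band decomposition establishes by hand (and the standard proof of the invertibility theorem uses essentially that same decomposition), so your argument outsources the hard step to a known but non-trivial external result, which you should cite explicitly (it is not proved or referenced in the paper; it follows, e.g., from the representation of a universally complete space as $C^{\infty}(X)$, or from Zaanen, \emph{Riesz Spaces II}). In compensation, your route is shorter, makes the meaning of $\frac{e}{e-f}$ precise before the series ever appears, and your step showing $f^{n}\downarrow 0$ gives the paper's Lemma \ref{zero limit in R(T)} for free by a cleaner argument than the contradiction used there.
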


\begin{proof}
If convergent, we can multiply the sum through by $e-f$ to obtain
\begin{align*}
(e-f) \sum_{i=0}^{\infty} f^{i} &= \sum_{i=0}^{\infty} f^{i} - \sum_{i=0}^{\infty} f^{i+1} 
= \sum_{i=0}^{\infty} f^{i} - \sum_{i=1}^{\infty} f^{i} 
= f^{0} 
= e.
\end{align*}
To prove convergence, let
\begin{align*}
Q_{n} = P_{\big((1-\frac{1}{2^{n}})e - f\big)^{+}} \Big( I - P_{\big((1 - \frac{1}{2^{n-1}})e - f\big)^{+}}\Big).
\end{align*}
Then $Q_{n}Q_{m} = 0$ for all $n \neq m$, and as $P_{(e-f)^{+}} = I$ we have $\vee_{n=1}^{\infty} Q_{n} = I$. As $\mathcal{R}(T)$ is universally complete and $Q_{n} \wedge Q_{m} = 0$ for all $n \neq m$, we can define
\begin{align*}
g = \bigvee_{n=1}^{\infty} 2^{n} Q_{n} e = \sum_{n=1}^{\infty} 2^{n} Q_{n} e \in \mathcal{R}(T)_{+}.
\end{align*}
Now
\begin{align*}
Q_{m} \sum_{i=0}^{n} f^{i} &= \sum_{i=0}^{n} (Q_{m} f)^{i} 
\leq \sum_{i=0}^{n} \Big(1 - \frac{1}{2^{m}}\Big)^{i} Q_{m} e 
\leq 2^{m} Q_{m} e 
\leq Q_{m} g.
\end{align*}
Taking suprema over $Q_{m}$, we have, for all $n \in \mathbb{N}$,
\begin{align*}
\sum_{i=0}^{n} f^{i} \leq g.
\end{align*}
As $\mathcal{R}(T)$ is universally complete, it is certainly Dedekind complete, and as $(\sum_{i=0}^{n} f^{i})_{n \in \mathbb{N}}$ is an increasing sequence in $\mathcal{R}(T)$ and is bounded above by $g$, we have that $\sum_{i=0}^{\infty} f^{i}$ converges in $\mathcal{R}(T)$. \qed
\end{proof}

\begin{lem}
\label{zero limit in R(T)}
Let $E$ be a $T$-universally complete Riesz space with weak order unit $e = Te$, where $T$ is a strictly positive conditional expectation operator on $E$. Let $f \in \mathcal{R}(T)_{+}$ such that $P_{(e-f)^{+}} = I$, then $f^{m} \rightarrow 0$ in order as $m \rightarrow \infty$.
\end{lem}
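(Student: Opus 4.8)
The plan is to show that $f^m \downarrow 0$ in order by exploiting the finiteness of $\sum_{i=0}^\infty f^i$ established in Lemma \ref{infinite sum in R(T)}. First I note that since $P_{(e-f)^+} = I$, we have $0 \leq f \leq e$ in $\mathcal{R}(T)$, so the sequence $(f^m)_{m \in \mathbb{N}}$ is decreasing and bounded below by $0$; by the Dedekind completeness of $\mathcal{R}(T)$ it has an infimum, say $h \in \mathcal{R}(T)_+$, with $f^m \downarrow h$. It remains to show $h = 0$. Since multiplication in the universally complete $f$-algebra $\mathcal{R}(T)$ is order continuous, multiplying $f^m \downarrow h$ by $f$ gives $f^{m+1} \downarrow fh$, but the left side also has infimum $h$, so $fh = h$, i.e. $(e-f)h = 0$.

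To conclude $h = 0$ from $(e-f)h = 0$, I would use the strict positivity of the band projection structure together with $P_{(e-f)^+} = I$. The idea: the hypothesis $P_{(e-f)^+} = I$ says that $(e-f)^+$ has the same carrier as $e$, i.e. $e - f$ is ``strictly positive'' relative to $e$ on all of $E$. On the band where $h$ is supported, $(e-f)h = 0$ forces $e - f$ to vanish on that band, contradicting $P_{(e-f)^+} = I$ unless that band is trivial. More concretely, for each $n$ consider the band projection $Q_n = P_{((1 - \frac{1}{2^n})e - f)^+}(I - P_{((1-\frac{1}{2^{n-1}})e - f)^+})$ as in the proof of Lemma \ref{infinite sum in R(T)}; on the range of $Q_n$ one has $f \leq (1 - 2^{-n})e$, hence $Q_n f^m \leq (1 - 2^{-n})^m Q_n e \to 0$ in order as $m \to \infty$, so $Q_n h = 0$. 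Since $\bigvee_{n=1}^\infty Q_n = I$ (this again uses $P_{(e-f)^+} = I$) and $0 \leq h$, taking the supremum over $n$ of $Q_n h = 0$ gives $h = 0$.

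Alternatively, and perhaps more cleanly, I would derive the result directly from convergence of the series: since $\sum_{i=0}^\infty f^i$ converges in order in $\mathcal{R}(T)$ by Lemma \ref{infinite sum in R(T)}, its partial sums form an order-Cauchy sequence, and in particular the tail terms $f^m$ must converge to $0$ in order (the general term of an order-convergent series tends to $0$: if $s_n \to s$ in order then $f^n = s_n - s_{n-1} \to 0$ in order, using that order limits are linear). This is the shortest route and avoids any band-projection bookkeeping.

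The main obstacle is the passage from $(e-f)h = 0$ (or equivalently from mere monotone boundedness) to $h = 0$: monotonicity and Dedekind completeness alone only give existence of the infimum $h$, and one genuinely needs the hypothesis $P_{(e-f)^+} = I$ to kill it. The series-tail argument sidesteps this by importing the quantitative content of Lemma \ref{infinite sum in R(T)}, so I would present that as the proof, with the band-projection computation $Q_n f^m \leq (1-2^{-n})^m Q_n e$ available as a fallback if a self-contained argument is preferred.
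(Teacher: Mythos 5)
Your proposal is correct, but it proceeds quite differently from the paper. The paper's proof is self-contained and argues by contradiction: setting $h=\inf_m f^m$, it uses the Archimedean property to produce, if $h\neq 0$, a nonzero band projection $Q=P_{(h-\frac{1}{n}e)^+}$ with $Qf^m=(Qf)^m\geq\frac{1}{n}Qe$ for all $m$, then extracts $m$-th roots to get $Qf\geq n^{-1/m}Qe$ and lets $m\to\infty$ to force $Q\leq I-P_{(e-f)^+}=0$, a contradiction. Your preferred route instead imports Lemma \ref{infinite sum in R(T)}: since the positive series $\sum_{i\geq 0}f^i$ order-converges with increasing partial sums $s_n\uparrow s$, one has $0\leq f^n\leq s-s_{n-1}\downarrow 0$, which is immediate and avoids both the contradiction and the root extraction; the cost is that the lemma is no longer independent of its predecessor (harmless here, since the paper states Lemma \ref{infinite sum in R(T)} first and does not use the present lemma in its proof). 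Your fallback band-projection argument, $Q_nf^m\leq(1-2^{-n})^mQ_ne\downarrow 0$ with $\vee_nQ_n=I$, is also sound and mirrors the decomposition already used in the proof of Lemma \ref{infinite sum in R(T)}. Two small points worth making explicit if you write this up: the deduction $0\leq f\leq e$ from $P_{(e-f)^+}=I$ (because $(e-f)^-$ is disjoint from the weak order unit $(e-f)^+$, hence zero) should be recorded, as it is what makes $(f^m)$ decreasing; and in your first sketch the passage from $(e-f)h=0$ to $(e-f)\wedge h=0$ uses semiprimeness of the unital Archimedean $f$-algebra, which you should either cite or replace by the explicit $Q_n$ computation you give.
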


\begin{proof}
Let $h = \inf\{f^{m} : m \in \mathbb{N}\} \in \mathcal{R}(T)_{+}$. Since $f^{m} \downarrow h$, if $h = 0$, then we have the result. Therefore suppose that $h \neq 0$, then since $\mathcal{L}^{1}(T)$ is Archimedean, there exists $n \in \mathbb{N}$ such that $nh > e$, giving $h \nleq \frac{1}{n} e$, which is to say that
\begin{align*}
Q = P_{(h - \frac{1}{n}e)^{+}} \neq 0.
\end{align*}
Therefore $\inf\{Qf^{m} : m \in \mathbb{N}\} = Qh \geq \frac{1}{n} Qe$, giving $Qf^{m} = (Qf)^{m} \geq \frac{1}{n} Qe$ for all $m \in \mathbb{N}$. Therefore
\begin{align*}
Qf \geq \frac{1}{n^{\frac{1}{m}}} Qe
\end{align*}
for all $m \in \mathbb{N}$. In particular, taking $m \rightarrow \infty$ gives $Q(e-f) \leq 0$, which implies that $Q \leq I - P_{(e-f)^{+}} = 0$. This contradicts $Q \neq 0$, and so we have that $h = 0$. \qed
\end{proof}

We are now in a position to analyze the autoregressive process of order 1 in the context of near-epoch dependence. For a similar exposition in the classical setting, see \cite{gallantwhite}.

\begin{defs}
\label{AR1}
Let $E$ be a $T$-universally complete Riesz space with weak order unit $e = Te$, where $T$ is a strictly positive conditional expectation operator on $E$. The sequence $(f_{n})_{n \in \mathbb{N}} \subset \mathcal{L}^{2}(T)$ is said to be a $T$-conditional autoregressive process of order 1 if, for all $n \in \mathbb{N}$,
\begin{align*}
f_{n} = \theta f_{n-1} + \varepsilon_{n},
\end{align*}
where $f_{0} = 0$, $\theta \in \mathcal{R}(T)$, and the sequence $(\varepsilon_{n})_{n \in \mathbb{N}} \subset \mathcal{L}^{2}(T)$ has $T$-conditional mean zero.
\end{defs}

Note that the sequences $(f_{n})_{n \in \mathbb{N}}$ and $(\varepsilon_{n})_{n \in \mathbb{N}}$ given in Definition \ref{AR1} can be extended arbitrarily to the index set $\mathbb{Z}$ by setting $f_{n} = \varepsilon_{n} = 0$ for all $n \leq 0$. We now show that if $(\Vert \varepsilon_{n} \Vert_{T,2})_{n \in \mathbb{N}}$ is bounded in $E$ by $g \in E_{+}$, say, and $\theta \in \mathcal{R}(T)$ satisfies $P_{(e-|\theta|)^{+}} = I$, the sequence $(f_{n})_{n \in \mathbb{N}}$ in the preceding definition is near-epoch dependent in $\mathcal{L}^{2}(T)$ on $(\varepsilon_{n})_{n \in \mathbb{N}}$, or, more precisely, on the family of conditional expectation operators $(T_{i}^{j})$, where $\mathcal{R}(T_{i}^{j}) = \langle \{ \varepsilon_{r} : i \leq r \leq j \} \cup \mathcal{R}(T) \rangle$, where the existence of the conditional expectation operators $(T_{i}^{j})$, as well as their compatibility with $T$, is assured by \cite{ando-douglas}.

To start, it is easy to show, by induction on $n$ and from Definition \ref{AR1}, that for all $n \in \mathbb{N}$,
\begin{align*}
f_{n} = \sum_{i=0}^{n-1} \theta^{i} \varepsilon_{n-i} = \sum_{i=0}^{\infty} \theta^{i} \varepsilon_{n-i}.
\end{align*}
Therefore, since $\sum_{i=0}^{m} \theta^{i} \varepsilon_{n-i} \in \mathcal{R}(T_{n-m}^{n+m}) \cap \mathcal{L}^{2}(T)$, we have by Theorem \ref{mse minimised},
\begin{align*}
\Vert f_{n} - T_{n-m}^{n+m} f_{n} \Vert_{T,2} \leq \, &\left\Vert f_{n} - \sum_{i=0}^{m} \theta^{i} \varepsilon_{n-i} \right\Vert_{T,2} \\
= \, &\left\Vert \sum_{i=m+1}^{\infty} \theta^{i} \varepsilon_{n-i} \right\Vert_{T,2} \\
= \, &\left\Vert \theta^{m} \sum_{i=1}^{\infty} \theta^{i} \varepsilon_{n-m-i} \right\Vert_{T,2}.
\end{align*}
Then, since the above summation is finite, we can apply homogeneity and the triangle inequality for $\left\Vert \, \cdot \, \right\Vert_{T,2}$ inductively to obtain
\begin{align*}
\Vert f_{n} - T_{n-m}^{n+m} f_{n} \Vert_{T,2} &\leq |\theta|^{m} \sum_{i=1}^{\infty} |\theta|^{i} \Vert \varepsilon_{n-m-i} \Vert_{T,2} \\
&\leq |\theta|^{m} \sum_{i=1}^{\infty} |\theta|^{i} \, g \\
&= |\theta|^{m+1} \bigg(\sum_{i=0}^{\infty} |\theta|^{i} \bigg) g \\
&= \frac{|\theta|^{m+1}}{e-|\theta|} \, g, \text{ by Lemma \ref{infinite sum in R(T)} } \\
&= g \, \xi_{m},
\end{align*}
where $\xi_{m} = \frac{|\theta|^{m+1}}{e - |\theta|} \in \mathcal{R}(T)_{+}$. Since the above holds for all $m, n \in \mathbb{N}$ and since $\xi_{m} \rightarrow 0$ in order as $m \rightarrow \infty$, which follows from Lemma \ref{zero limit in R(T)}, we have, under the conditions set out, that the $T$-conditional autoregressive process of order 1 is near-epoch dependent in $\mathcal{L}^{2}(T)$ on $(T_{i}^{j})$, where $\mathcal{R}(T_{i}^{j}) = \langle \{ \varepsilon_{r} : i \leq r \leq j \} \cup \mathcal{R}(T) \rangle$.


\end{document}